\newtheorem{theo}{Theorem}[section]
\newtheorem{lem}[theo]{Lemma}
\newtheorem{prop}[theo]{Proposition}
\newtheorem{defin}[theo]{Definition}
\theoremstyle{definition}
\newtheorem{rem}[theo]{Remark}
\newcommand{\void}[1]{}     
\newcommand{\lra}{\longrightarrow}
\newcommand{\PP}{\mathbb{P}}
\newcommand{\PGL}{{\mathrm{PGL}}}
\newcommand{\Aut}{{\mathrm{Aut}}}
\newcommand{\Gal}{{\mathrm{Gal}(\overline{\mathbb{Q}}/\mathbb{Q)}}}
\newcommand{\ZZ}{\mathbb{Z}}
\newcommand{\QQ}{\mathbb{Q}}
\newcommand{\CC}{\mathbb{C}}
\title{A faithful action of Gal($\overline{\QQ}/\QQ$) on Zariski multiplets}
\author{Michael L\"onne and Matteo Penegini}
\address{Michael L\"onne\\  %Lehrstuhl &new Adresse verbessert
Mathematik VIII, Universit\"at Bayreuth,
NWII, Universit\"atsstrasse 30, D-95447 Bayreuth,  Germany}
\email{michael.loenne@uni-bayreuth.de}
\address{Matteo Penegini\\
Universit\`a degli Studi di Pavia\\
Dipartimento di Matematica - " F. Casorati"\\
Via Ferrata 5,
I-27100 Pavia (PV) - 
Italy} 
\email{matteo.penegini@unipv.it}
\subjclass[2010]{14J10,14J29,20D15,20D25,20H10,30F99.}
\keywords{Zariski pairs, absolute Galois group, moduli of surfaces of general type}
\begin{document}

%%%%%%%%%%%%%%%%%%%%%%%%%%%%%%%%%%%%%%%%%%%%%%%%%%%%%%%%%%%%%%%%%%%%%%%%%%%%%%%%%%%%%%%%%%%
%%%%%%%%%%%%%%%%%%%%%%%%%%%%%%%%%%%%%%%%%%%%%%%%%%%%%%%%%%%%%%%%%%%%%%%%%%%%%%%%%%%%%%%%%%%

\maketitle

%%%%%%%%%%%%%%%%%%%%%%%%%%%%%%%%%%%%%%%%%%%%%%%%%%%%%%%%%%%%%%%%%%%%%%%%%%%%%%%%%%%%%%%%%%%
%%%%%%%%%%%%%%%%%%%%%%%%%%%%%%%%%%%%%%%%%%%%%%%%%%%%%%%%%%%%%%%%%%%%%%%%%%%%%%%%%%%%%%%%%%
%%%%%%%%%%%%%%%%%%%%%%%%%%%%%%%%%%%%%%%%%%%%%%%%%%%%%%%%%%%%%%%%%%%%%%%%%%%%%%%%%%%%%%%%%%

\begin{abstract}

In this work, we establish two main results in the context of arithmetic and geometric properties of plane curves. First, we construct numerous new examples of arithmetic Zariski pairs and multiplets, where only a few ones were previously available. Second, we describe a faithful action of the absolute Galois group on the equisingular strata of plane curves, providing insights into the interplay between Galois representations and the geometry of singular plane curves. We conclude the paper with very concretes examples of the general results obtained. 
\end{abstract}

%%%%%%%%%%%%%%%%%%%%%%%%%%%%%%%%%%%%%%%%%%%%%%%%%%

\section{Introduction}
The purpose of this work is twofold. First, we aim to expand the known landscape of arithmetic Zariski pairs and multiplets by providing a substantial number of new examples,  enriching a domain where only a few ones were previously available \cite{S08,AC17}. Second, we establish a faithful action of the absolute Galois group on the equisingular strata of plane curves, revealing an intriguing connection between arithmetic structures and singularity theory.  Both results can be seen as a natural continuation of our previous work \cite{LP20} adding the new results obtained in \cite{BCG06,BCG15}. 

The notion of \emph{Zariski pair} was introduced by E. Artal in his seminal paper 
\cite{artal94}.

%{\color{blue}
\begin{defin}\label{def_ZP} A pair of complex reduced plane curves  $(B_1, B_2)$ is called a Zariski pairs if it satisfies the following conditions:
\begin{enumerate}
\item There exist tubular neighborhoods $T(B_i)$, $i = 1, 2$, and a homeomorphism $h\colon T(B_1) \lra T(B_2)$ such that $h(B_1) = B_2$.
\item There exists no homeomorphism $f : \PP^2 \lra \PP^2$ with $f(B_1) = B_2$ i.e., the pairs $(\PP^2,B_1)$ and $(\PP^2,B_2)$ are not homeomorphic.
\end{enumerate}
Analogously $(B_1 ,\ldots , B_k)$ is a Zariski $k$-plet if $(B_i, B_j)$ is a Zariski pair for
any i $\neq j$.
\end{defin} 
%}

%{\color{purple}
As noted in \cite[Remark 3]{ACT10}) the first condition in Definition \ref{def_ZP} can be replaced
by equality of combinatorial data associated to the $B_i$, $i = 1, 2$,
which in case of irreducible curves consist of the degree and the
counting function associating to each topological type of singularities
the number of corresponding singular points.

This definition put a new focus on an important question for plane complex projective curves:
\begin{quote}
Do the topological types of the singularities of a plane curve $B$ and the degree
determine the homeomorphism type of the pair $(\PP^2,B)$?
\end{quote}
Obviously this question has positive answer in case of plane curves without 
singularities, but already the case of nodal curves took a long time to be
established beyond doubt, cf.\ Severi \cite{S21}, Harris \cite{H86}.
In these, and many other cases the proof shows the connectedness of the
locus of curves with given singularity types in the projective space of plane
curves of the given degree. 

Instead, Zariski proved that in degree $6$ the complement of a sextic, regular
except for six cusps, has non-abelian fundamental group only if the cusps
lie on a conic, so to honour his results \emph{Zariski pairs} were named.

The excellent survey \cite{ACT10} can report on many new examples
and provides some strategy to find additional ones in two steps
\begin{enumerate}
\item[(I)]
Locate curves of the same invariants in different connected components
of the corresponding locus,
\item[(II)]
Find an effective invariant of the embedded pair, which distinguishes the
curves.
\end{enumerate}

In our previous work \cite{LP20} we used the fundamental group of the complement together with the
conjugacy class of meridians to the curve, which is well-defined for
irreducible curves, in the second step.

Addressing the first step,
the  idea developed in \cite{LP20}  is to exploit the disconnectedness of moduli
of surfaces of general type isogenous to a product.
As these surface have ample canonical class with $K_S^2\geq8$, the $m$-canonical map
is an embedding into a projective space $\PP^N$
at least for $m\geq 3$ by Bombieri's theorem \cite{B73}.

An embedded surface then is mapped onto $\PP^2$ using projection
with center a disjoint projective subspace of codimension $3$.
By the theorem of Ciliberto and Flamini \cite{CiF11} the corresponding branch
curve is reduced, irreducible and smooth except for ordinary nodes
and cusps if the center is sufficiently general.

This procedure extends nicely to families of 
$m$-canonical embeddings and
centers and thus associates a connected component of equisingular
plane irreducible curves to a connected component of surfaces of
general type with ample canonical bundle.

In this work we go even further. Inspired by and relying on the result given in \cite{BCG15} we give new insight on arithmetic Zariski pairs. We define arithmetic Zariski pairs following Shimada 

\begin{defin} A Zariski pair $(B_1, B_2)$ is called \emph{arithmetic Zariski pair} if $B_1$ and $B_2$ are conjugate plane curves, i.e.\ there exists a polynomial $P(x_0,x_1,x_2)$ with complex coefficients and an automorphism $\sigma$ of the field $\CC$ such that 
\[
B_1=\{P=0\}, \textrm{ and } B_2=\{P^{\sigma}=0\}
\]
where $P^{\sigma}$ is the polynomial obtained from $P$ by applying $\sigma$ to its coefficients. 
\end{defin}

Our main theorem on Zariski multiplets of high cardinality follows
our results \cite{P13, GP14,LP14,LP16} on high cardinalities of the set of connected components
of moduli spaces of surfaces isogenous to a product (SIP) and exploits the 
solution of the Chisini conjecture by Kulikov \cite{K08} for generic branched
projections.

Putting these into the perspective of the action of the absolute Galois group on varieties
defined over number fields, we are going to show a few new results.
The first concerns a faithful action of the absolute Galois group on
loci of plane curves.

\begin{theo}\label{thm main 1}
There is a faithful action of the absolute Galois group $\Gal$ on the set of connected components of equisingularity strata of plane irreducible curves with at most nodal and ordinary cuspidal singularities. 
\end{theo}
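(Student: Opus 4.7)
The plan is to combine three ingredients already present in the introduction: the faithful action of $\Gal$ on the set of connected components of the moduli space of surfaces isogenous to a product, established in \cite{BCG15}; the geometric construction recalled from \cite{LP20} that attaches an equisingular stratum of plane nodal-cuspidal irreducible curves to a component of such surfaces; and Kulikov's solution \cite{K08} of the Chisini conjecture, which controls how much information about a surface can be recovered from its branch curve.

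First I would define a map $\Phi$ from the set of connected components of the moduli of surfaces isogenous to a product to the set of connected components of equisingular strata of plane irreducible curves with only nodes and ordinary cusps. To a component $M$ I associate, for a fixed sufficiently large $m$, the family of pairs $(S,c)$ with $S\in M$ realised via its $m$-canonical embedding and $c$ a generic projection center of codimension $3$. Bombieri's theorem gives the embedding, Ciliberto--Flamini \cite{CiF11} guarantees that the branch curve $B\subset \PP^2$ is reduced, irreducible and has at most nodes and ordinary cusps, and varying $(S,c)$ over a connected base yields a connected family of such curves, hence a well-defined component $\Phi(M)$. Since each step is algebraic and all varieties involved are defined over number fields, one has the equivariance $\Phi(M^\sigma)=\Phi(M)^\sigma$ for every $\sigma\in\Gal$.

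Next I would show that $\Phi$ is injective on $\pi_0$. Suppose $\Phi(M)=\Phi(M')$; then curves $B$ and $B'$ arising from surfaces in $M$ and $M'$ can be joined by a path in the equisingular stratum. By Kulikov's theorem the degree and the node/cusp counts of $B$, which grow with $m$ and are controlled by the inequality $K_S^2\geq 8$ coming from the SIP assumption, are large enough that the generic branched cover of $\PP^2$ with branch curve $B$ is uniquely reconstructed from $B$. Lifting the path of curves to a path of coverings then produces a deformation from $S$ to $S'$ inside the moduli of surfaces isogenous to a product, forcing $M=M'$. Applied to $M'=M^\sigma$, this shows that $\Phi(M)^\sigma=\Phi(M)$ implies $M^\sigma=M$.

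With these pieces in place, the theorem follows quickly: given a non-trivial $\sigma\in\Gal$, \cite{BCG15} supplies a component $M$ with $M^\sigma\neq M$, and the previous step upgrades this to $\Phi(M)^\sigma=\Phi(M^\sigma)\neq\Phi(M)$, so $\sigma$ acts non-trivially on connected components of equisingular strata of plane irreducible curves with at most nodes and ordinary cusps. The main obstacle in this program is the $\pi_0$-injectivity of $\Phi$: one has to propagate the Chisini-type reconstruction of a single surface from a single branch curve to a reconstruction across connected families of such curves, and to check that the numerical invariants of the branch curves produced by the $m$-canonical construction do fall in the effective range to which Kulikov's theorem applies.
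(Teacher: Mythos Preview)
Your overall architecture matches the paper's: use the \cite{BCG15} faithfulness results on SIP moduli, pass to branch curves via the $m$-canonical embedding and Ciliberto--Flamini, and invoke Kulikov to conclude that distinct surface components yield distinct stratum components. The one substantive difference is \emph{which} input from \cite{BCG15} you use. You invoke their Theorem~1.3 (faithfulness on connected components, Theorem~\ref{theo_ffaGal} here) and are then forced to prove full $\pi_0$-injectivity of your map $\Phi$. The paper instead uses their Theorem~1.1 (Theorem~\ref{theo_iso} here): for each non-trivial $\sigma$ there is a SIP $X$ with $\pi_1(X)\not\cong\pi_1(X^\sigma)$. This stronger input feeds directly into Theorem~\ref{theo_ZP} (from \cite{LP20}), which says that SIPs with equal Euler number but non-isomorphic $\pi_1$ have generic $m$-canonical branch curves forming a Zariski pair, hence lying in distinct stratum components. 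Together with the equivariance encoded in Theorem~\ref{theo_AZP}, this finishes the proof.

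This difference matters precisely at the point you yourself flag as the main obstacle. Your ``lifting the path of curves to a path of coverings'' step requires every curve along the path in the equisingular stratum to be a generic branch curve, which is not granted: Kulikov's theorem gives uniqueness of the covering over a given branch curve, not existence for an arbitrary nodal--cuspidal curve in the stratum. The paper's route sidesteps this entirely. It never needs $\Phi$ to be injective on \emph{all} pairs of components, only to separate components whose surfaces have non-isomorphic fundamental groups; and that is exactly what Theorem~\ref{theo_ZP} supplies, via a topological (homeomorphism of pairs $(\PP^2,B)$) rather than a deformation-theoretic argument. So your program is correct in outline, but the paper's choice of the sharper \cite{BCG15} result makes the hard step you identify disappear.
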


In particular we can derive from that set-up the existence of arbitrarily large arithmetic Zariski multiplets of plane curves.
%\begin{theo}\label{thm main 2}
%There exist arithmetic Zariski multiplets of equisingular  plane irreducible curves with at most nodal and ordinary cuspidal singularities of arbitrarily large finite cardinality.
%\end{theo}
%While this existence result does not come with estimates for the numerical invariants,
Indeed, we are able to make use of specific examples of large Galois orbits of surfaces for the final result.
 
\begin{theo}\label{thm main 3}
There is an infinite sequence of prime numbers $p_i$ such that there is a Galois orbit of lengths $\lambda(p_i)=\varphi(p^2_i-1)/8$ of branch curves $B_i$ of degree 
\[
\deg(B_i)=60p_i\left(\frac{p_i-1}{6}-1\right)\left(\frac{p_i+1}{4}-1\right).
\]
In particular the corresponding sequence of $\lambda(p_i)$
is not bounded above.
\end{theo}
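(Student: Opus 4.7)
The plan is to combine the BCG construction of surfaces isogenous to a product with large Galois orbits (from \cite{BCG15}) with the surface-to-plane-curve correspondence underlying Theorem \ref{thm main 1}. For each prime $p$ in a suitable arithmetic progression --- chosen so that $\PSL(2,p)$ admits the generating triangle systems required by the BCG signatures, the set being infinite by Dirichlet's theorem --- the paper \cite{BCG15} yields a Galois orbit of length exactly $\varphi(p^2-1)/8$ of pairwise non-isomorphic surfaces $S_\tau=(C_1\times C_2)/G_\tau$ isogenous to a product, with $G_\tau\cong\PSL(2,p)$. These Beauville-type surfaces have ample canonical class, so Bombieri's pluricanonical theorem gives an $m$-canonical embedding $S_\tau\hookrightarrow\PP^{N_m}$ for every $m\geq 3$.

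The first step is to push each $S_\tau$ through the construction described in the introduction: compose the $m$-canonical embedding with projection from a sufficiently general disjoint center of codimension three, producing a branch curve $B_\tau\subset\PP^2$ that is irreducible and at worst nodal-cuspidal by the theorem of Ciliberto and Flamini. The second step is to observe Galois equivariance: the $m$-canonical embedding is intrinsic, the Ciliberto--Flamini genericity locus for centers is open and algebraic, and a center defined over the common field of definition of the entire BCG orbit can be chosen generic, so $\Gal$ acts on $\{S_\tau\}$ compatibly with the induced action on the equisingular components attached to $\{B_\tau\}$. By the faithfulness statement of Theorem \ref{thm main 1}, this latter action does not collapse orbits, so one obtains an orbit of branch curves of length exactly $\varphi(p^2-1)/8$. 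The third step is the degree computation: writing $H=mK_{S_\tau}$ and observing that the ramification divisor of the projection is $R=K_{S_\tau}+3H=(1+3m)K_{S_\tau}$, one gets
\[
\deg B_\tau \;=\; H\cdot R \;=\; m(3m+1)\,K_{S_\tau}^2.
\]
Feeding in the standard identity $K_{S_\tau}^2 = 8(g_1-1)(g_2-1)/|G_\tau|$ for surfaces isogenous to a product, extracting $g_i-1$ via Riemann--Hurwitz applied to the two triangle covers specified by the BCG signatures, and taking $m=3$, yields the stated polynomial in $p_i$ after an elementary simplification.

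The main obstacle lies in the second step: without the orbit-preserving property of the assignment $S\mapsto[B]$ at the level of equisingular components, the Galois orbit of surfaces could a priori map to a strictly shorter orbit of branch curves. This is exactly what Theorem \ref{thm main 1} rules out, so the technical burden there is discharged by invoking that theorem; a secondary nuisance, arranging the projection centers in a Galois-compatible way across the family, is dealt with by working over the field of definition of the whole BCG orbit and choosing a center generic over that field. Unboundedness of $\lambda(p_i)=\varphi(p_i^2-1)/8$ as $p_i\to\infty$ then follows from the standard lower bound $\varphi(n)\gg n/\log\log n$, completing the argument.
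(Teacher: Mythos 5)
Your overall route coincides with the paper's: take a large Galois orbit of Beauville surfaces, pass to generic $3$-canonical branch curves, compute the degree from $K_S^2$ via Riemann--Hurwitz (your $\deg B = m(3m+1)K_S^2 = 30K_S^2$ for $m=3$ agrees with Lemma \ref{lem_3caninv}), and argue that the orbit does not shrink when passing to equisingular strata. However, there are two genuine problems.

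First, the input orbits are misattributed and the group is wrong. The orbits of length $\varphi(p^2-1)/8$ come from \cite{GJT18}, for the primes $p\equiv 19 \pmod{24}$ (infinitely many by Dirichlet), with group $\PGL(2,p)$ of order $p(p^2-1)$ and signatures $(2,3,p-1)$, $(2,4,p+1)$; they are not the orbits of \cite{BCG15}, and with $\PSL(2,p)$ (order $p(p^2-1)/2$) the value $K_S^2=p(p-3)(p-7)/12$ would be halved and the stated degree formula would fail. Since the exact degree is part of the statement, this is not a cosmetic slip.

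Second, and more seriously, the step you yourself identify as the main obstacle is discharged with the wrong tool. The faithfulness statement of Theorem \ref{thm main 1} says only that no nontrivial $\sigma$ fixes \emph{every} stratum component; it does not imply that the equivariant assignment from components of the moduli space of surfaces to components of strata is injective on a given orbit --- the stabilizer of $[B]$ could strictly contain that of $[S]$, so the curve orbit could a priori be shorter. What actually closes this gap is Theorem \ref{theo_ZP} (equivalently \cite[Theorem 3.12]{LP20}) together with weak rigidity: if two of the branch curves lay in the same stratum, the corresponding surfaces would have isomorphic fundamental groups, hence by Theorem \ref{weak} would lie in the same component of moduli, contradicting that the \cite{GJT18} orbit consists of pairwise non-homeomorphic surfaces. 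Note also that ``pairwise non-isomorphic,'' which is all you assert about the $S_\tau$, is not enough for this argument --- you need them pairwise distinguished by $\pi_1$. With the citation corrected and the appeal to Theorem \ref{thm main 1} replaced by this contradiction argument, your proof matches the paper's; the degree computation and the unboundedness of $\varphi(p_i^2-1)/8$ are fine as written.
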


\noindent
Let us now explain the organization of this paper.

In the next two sections, addressing preliminary results, we lay the foundational tools and concepts necessary for our results. First, we define, for each automorphism \(\sigma\) of the field of complex numbers \(\mathbb{C}\), a functor \(F_{\sigma}\) from the category of complex varieties to itself. This functor will serve as the central tool in our study of the action of the absolute Galois group on equisingular strata of plane curves. Second, we introduce the notion of generic \(m\)-canonical branch curves. These curves arise as branch curves of a covering \(p\colon S \to \mathbb{P}^2\), where \(S\) is a surface of general type. The covering \(p\) is constructed as the composition of the \(m\)-canonical embedding of \(S\) and a projection onto \(\mathbb{P}^2\) from a generic disjoint center. We then recall the definition and basic properties of surfaces isogenous to a higher product and the weak rigidity theorem of Catanese \cite{cat00}, which allows us to describe the connected components of the moduli space of such surfaces. 

In the final section we first
demonstrate that the associated generic \(m\)-canonical branch curves of surfaces isogenous to a product with different fundamental group form a Zariski pair (see Theorem \ref{theo_ZP}) and, using, in particular cases, the functor \(F_{\sigma}\), even arithmetic Zariski pairs.
Finally, we provide detailed proofs of the main theorems stated in the introduction.

\textbf{Acknowledgement} The authors wish to thank F. Catanese, F. Flamini for fruitful discussions. 
The second author was partially supported by the Research Project PRIN 2020 - CuRVI, CUP J37G21000000001, the MIUR Excellence Department of Mathematics, University of Genoa, CUP D33C23001110001 and INDAM-GNSAGA.

%{\bf Structure of the paper}

%\begin{enumerate}
%\item Introduction
%\item preliminaries 
%\begin{itemize}
%\item Discussion on fields -- automorphism, extendibility,  field of moduli and field of definition. The Functor. And well defined action on the 
%\item Results of Fabrizio on Galois BCG15 set up SIP and Beauville in particular numerical invariants 
%\item Definition of $\overline{\QQ}$
%\item Results of LP20
%\item stratification defined over $\overline{\QQ}$.
%\end{itemize}
%\item Proofs of the results main 1 and main 2
%\item Proof of main 3
%\end{enumerate}

%\newpage
%%%%%%%%%%%%%%%%%%%%%%%%%%%%%%%%%%%%%%%%%%%%%%%%%%
\section{Preliminaries}

First, we investigate the action of $\Aut(\CC)$ and $\Gal$ on equisingular strata of the parametrizing space of plane curves with only nodes and cusps as singularities. Most of the material in this section is well known therefore we shall omit some details. For example a consequence of the Zorn Lemma is the following theorem.

\begin{theo} Every $\sigma \in \Gal$ extends to an automorphism of the field of complex numbers $\CC$. Hence we have a surjective morphism of groups
\[
\varphi \colon \Aut(\CC) \lra \Gal.
\] 
\end{theo}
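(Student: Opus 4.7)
The plan is to construct the extension in two stages, working with a transcendence basis and then taking an algebraic closure, with Zorn's lemma appearing in the second stage as the author suggests. First, I would fix $\sigma\in\Gal$ and choose a transcendence basis $T\subset\CC$ of $\CC$ over $\overline{\QQ}$; the existence of such a basis already requires the axiom of choice. On the purely transcendental extension $\overline{\QQ}(T)\subset\CC$ I would define a provisional extension $\sigma_T$ by letting it act as $\sigma$ on $\overline{\QQ}$ and as the identity on $T$. Since $T$ is algebraically independent over $\overline{\QQ}$, this rule extends unambiguously to a field automorphism of $\overline{\QQ}(T)$: a rational expression in elements of $T$ with coefficients in $\overline{\QQ}$ is sent to the same expression with coefficients transformed by $\sigma$.

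Second, I would observe that $\CC$ is an algebraic closure of $\overline{\QQ}(T)$, because $\CC$ is algebraically closed and algebraic over $\overline{\QQ}(T)$ by construction of $T$. The classical theorem on extension of isomorphisms to algebraic closures (which is precisely where Zorn's lemma enters, applied to the poset of partial extensions ordered by inclusion and graph-compatibility) then provides an automorphism $\tilde\sigma\colon\CC\to\CC$ restricting to $\sigma_T$ on $\overline{\QQ}(T)$, and in particular restricting to $\sigma$ on $\overline{\QQ}$.

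For the final statement about $\varphi$, I would define $\varphi\colon\Aut(\CC)\to\Gal$ as the restriction map. This is well-defined because every $\tau\in\Aut(\CC)$ preserves the subfield $\overline{\QQ}$, which is intrinsically characterized inside $\CC$ as the set of elements satisfying a nontrivial polynomial relation over $\QQ$, a condition stable under any field automorphism fixing $\QQ$ (and every automorphism of $\CC$ fixes the prime field $\QQ$ pointwise). Checking that $\varphi$ is a group homomorphism is immediate. Surjectivity is then the content of the extension just produced: for any $\sigma\in\Gal$, the automorphism $\tilde\sigma$ satisfies $\varphi(\tilde\sigma)=\sigma$.

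The only real subtlety, and the step I would flag as the main (though standard) obstacle, is the invocation of Zorn's lemma in the second stage to extend an automorphism of a field to its algebraic closure; every other step is formal verification. Since this is a textbook lemma the author chooses to omit the detailed Zorn argument, it is sufficient in the writeup to cite it and to make sure that the transcendence basis argument correctly reduces the problem to that lemma.
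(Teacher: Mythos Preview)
Your argument is correct and is exactly the standard proof the paper has in mind: the paper gives no proof of this theorem beyond remarking that it is ``a consequence of the Zorn Lemma,'' and your transcendence-basis reduction followed by the Zorn-based extension to the algebraic closure is precisely the classical argument being invoked. There is nothing to add or correct.
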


Following \cite[Def.4.5]{BCG15} we consider functors on the category 
$\underline{\mathbf{\CC-var}}$ of complex varieties.

\begin{defin}\label{functor_Fsigma}
For each   $\sigma \in Aut(\CC)$ let 
\begin{eqnarray*}
F_{\sigma}\colon \underline{\mathbf{\CC-var}} &\lra & \underline{\mathbf{\CC-var}} \\
X &\mapsto & X^{\sigma} \\ 
\left( f \colon X \lra Y\right) & \mapsto &  \left(f^{\sigma}\colon X^{\sigma}\lra Y^{\sigma}\right)
\end{eqnarray*} 
the functor that to a complex variety $X$ associates $X^{\sigma}=X \otimes_{\CC,\sigma} \CC$. % where the action of $\sigma$ is on the coefficinets of the defining equation  
\end{defin}

The functor we have just defined shares the following properties (see \cite[section 4]{BCG15}):

\begin{rem}
\begin{enumerate}
\item
$\sigma\in \Aut(\CC)$ acts on $\CC[z_0,\dots,z_n]$ by sending the element 
$P(z) = \sum_{I=(i_0,\dots,i_n)} a_Iz^I$ to
\[
\sigma(P)(z) \quad := \quad
\sum_{I=(i_0,\dots,i_n)}  \sigma(a_I)z^I.
\]
\item
Let $X$ be a projective variety
\[
X \quad \subset \quad \PP^n_\CC,\quad X := \{ z \mid f_i(z) = 0 \; \forall i \,\}.
\]
The action of $\sigma$ extends coordinatewise to $\PP^n_\CC$ and carries $X$ to the set
$\sigma(X)$ which is another variety that coincides with $X^\sigma$, the \emph{conjugate
variety}.\\
In fact, since $f_i(z)=0$ if and only if $\sigma(f_i)(\sigma(z))=0$, one has that
\[
X^\sigma \quad = \quad \{ w \mid \sigma(f_i)(w) =0 \; \forall i \,\}.
\]
    \item If $X$ is defined over a subfield $k_0 \subset \mathbb{C}$, i.e., there is a $k_0$-scheme $X_0$ such that $X \cong X_0 \otimes_{k_0} \mathbb{C}$, then $X^\sigma$ depends only on the restriction $\sigma|_{k_0}$ and if moreover $\sigma$ is the identity on $k_0$, then $X^\sigma$ is canonically isomorphic to $X$; we will use this quite often with $k_0=\QQ$.
    
    \item The formation of $X^\sigma$ is compatible with products;
    
    \item For a group action $G \times X \to X$ with $G$ finite (hence defined over $\mathbb{Q}$ as an algebraic group), then $G^\sigma = G$ canonically and applying the conjugation functor gives a conjugate action $G \times X^\sigma = G^\sigma \times X^\sigma \to X^\sigma$.
\end{enumerate}
\end{rem}

In \cite[section 5]{BCG15} the authors argue that
the action of $\Aut(\CC)$ on all complex surfaces descents to an action of $\Aut(\CC)$ on the concected components of the  moduli space of complex projective surfaces of general type. Indeed if $S_1$ and $S_2$ are two surfaces  belonging to the same family $\mathcal{X} \lra B$ we can apply the functor $F_{\sigma}$ and get the following functorial diagram
 \begin{equation}
 \label{functorfamily}
 \xymatrix{
\mathcal{X} \ar[r]^{F_{\sigma}} \ar[d]_{\pi} & \mathcal{X}^{\sigma} \ar[d]^{\pi^{\sigma}} \\
B \ar[r]_{F_{\sigma}} & B^{\sigma}
}
\end{equation}
 and the family $\mathcal{X}^{\sigma} \lra B^{\sigma}$ connects the surfaces $S_1^{\sigma}$ and $S_2^{\sigma}$. More precisely, choose the canonical model $X$ of a surface of general type $S$ and apply the field automorphism $\sigma \in  \Aut(\CC)$ to a point of the Hilbert scheme corresponding to the $m$-canonical image of $S$ .
%(if $m \geq 5$ the corresponding surface is isomorphic to $X$). 
We obtain a surface $X^{\sigma}$, and whose minimal model is $S^{\sigma}$.  Since the Hilbert scheme  corresponding to $m$-pluricanonical embedded surfaces is defined over $\ZZ$, it is invariant under the functor $F_{\sigma}$ so its connected components are defined over $\overline{\QQ}$. We conclude that the elements  in  the kernel of $\varphi \colon \Aut(\CC) \lra \Gal$ act trivially on connected components of the moduli space of surfaces which is in bijection with the set of connected components of the Hilbert subscheme just defined. For more details see \cite[Proposition 5.7]{BCG15}.
\bigskip

We want to do something similar for parameter spaces of plane curves.
More precisely, to replace the Hilbert scheme we consider equisingularity strata 
of plane curves with a fixed combinatorial type. Like the Hilbert scheme, such strata are in
general neither irreducible nor connected.
%(maybe once more the case of smooth and nodal curves?)

%{\color{purple}
\begin{defin}
Given a \emph{combinatorial type of degree $d$ plane curves}
as defined in full generality in \cite{ACT10},
the corresponding \emph{equisingular stratum} in the parameter space of degree $d$ plane curve is given by the locus 
of all curves with that combinatorial type.
\end{defin}
Below we will only consider irreducible plane curves with ordinary
nodes and cusps with topological type determined by the respective
Milnor numbers $\mu=1$ and $\mu=2$.
In this situation, the combinatorial datum of $B$ is simply 
encoded by the triple \((d,n,c)\), where 
\(d = \deg B\) is the degree, \(n\) is the number of nodes, and \(c\) is the number of cusps of \(B\).
%}
\medskip

%{\color{blue} 
The corresponding equisingular stratum is a quasi-projective variety 
\[
  \mathcal{S}^{d}_{n,c} \subset \PP^{\frac{d(d+3)}{2}}=:\PP,
\]
as proved by Wahl in~\cite{W74}.  
In particular, \(\mathcal{S}^{d}_{n,c}\) is a \emph{locally closed} subset of a projective space,
and hence a complex algebraic variety endowed with the induced standard analytic topology.
Since such spaces are locally path connected, the connected components of 
\(\mathcal{S}^{d}_{n,c}\) coincide with its path-connected components.
Consequently, \(\mathcal{S}^{d}_{n,c}\) has finitely many (path-)connected components.

Explicitly, we can describe
\[
  \mathcal{S}^{d}_{n,c}
  = 
  \Bigl\{
    [p] \,\Bigm|\,
    p \in \CC[x,y,z]_d, \;
    C_p = \{p=0\} \subset \PP^2 
    \text{ is irreducible with } n \text{ nodes and } c \text{ cusps}
  \Bigr\}.
\]

We observe that if \((B_1,B_2)\) is a Zariski pair, 
where each \(B_i\) is given by an equation \(\{p_i=0\}\), \(i=1,2\), 
then the corresponding points \([p_1]\) and \([p_2]\) lie in 
different path-connected components of \(\mathcal{S}^{d}_{n,c}\).  
The converse is not known in general. 

This motivates the following definition where, by abuse of notation, 
we shall consider a curve \(B_i\) also as element in  $\mathcal{S}^{d}_{n,c}$.

%\rot{
%A consequence of the Theorem \cite[Theorem 1.1]{PG20} of Parusinski--Rond is the following result (see also \cite[Theorem 11]{PG20}), which will allow us to proceed as before.

%\begin{theo} Equisingular strata are defined over $\QQ$.
%\end{theo}
%Therefore also the connected components of each stratum are defined over $\overline{\QQ}$.
%In an equisingular stratum the property of two curves to belong to the same connected 
%component can be given as follows:
%}

\begin{defin}
Two curves \(B_1, B_2\) in $\mathcal{S}^{d}_{n,c}\) are said to be 
\emph{rigidly isotopic} if there exists a continuous path (in the standard analytic topology)
\(\gamma : [0,1] \to \mathcal{S}^{d}_{n,c}\) 
such that \(\gamma(0) = [p_1]\) and \(\gamma(1) = [p_2]\), 
where \(B_i = \{p_i = 0\}\) for \(i = 1,2\).
Equivalently, \(B_1\) and \(B_2\) belong to the same 
path-connected component of the equisingular stratum 
\(\mathcal{S}^{d}_{n,c}\).
\end{defin}

\begin{rem}
%{\color{purple} 
To get this definition into the right perspective let us note
the following:
%}
\begin{enumerate}
\item In \cite{ACT10}, rigid isotopy is defined using smooth paths.
We observe that, in this context, the two definitions -- using either continuous paths or smooth paths -- define the same path components, hence the same equivalence relation.
\item From an algebraic viewpoint, such a path corresponds to a 
flat projective family of plane curves 
\(\pi : \mathcal{X} \to T\) over a connected complex curve \(T\),
whose fibers \(\mathcal{X}_t \subset \PP^2\) all have degree \(d\) 
and precisely \(n\) nodes and \(c\) cusps.
Thus, rigid isotopy classes coincide with the connected components of 
the equisingular stratum, both topologically and algebraically.
\item Consequently, the connected components of the equisingular stratum 
\(\mathcal{S}^{d}_{n,c}\) are precisely the rigid isotopy classes of curves.
\end{enumerate}
\end{rem}

\medskip

We now unpack the last two remarks more carefully, since they involve both the
analytic and the Zariski topologies.

Let \([p_1],[p_2]\in \mathcal{S}^{d}_{n,c}\subset\PP^{\frac{d(d+3)}{2}}\).
We define the following equivalence relations reflecting the two 
topologies:

\begin{enumerate}
\item 
\([p_1]\sim_1 [p_2]\) if they lie in the same path-connected component
of \(\mathcal{S}^{d}_{n,c}\) in the standard analytic topology;

\item 
\([p_1]\sim_2 [p_2]\) if there exists a finite chain of quasi-projective
irreducible curves \(T_1,\dots,T_m\subset \mathcal{S}^{d}_{n,c}\) such that
%\([p_1]\in T_1(\CC)\), \([p_2]\in T_m(\CC)\), and 
\(T_i\cap T_{i+1}\neq\varnothing\) for all \(i\)
and \([p_1], [p_2]\) are points on $T_1$ resp.\ $T_m$.\\
(Equivalently, \(\sim_2\) is the transitive closure of the relation
“lies on a common irreducible quasi-projective curve contained in 
\(\mathcal{S}^{d}_{n,c}\)”). 
\end{enumerate}

\noindent
Then these two equivalence relations coincide:
\[
[p_1]\sim_1 [p_2]
\quad\Longleftrightarrow\quad
[p_1]\sim_2 [p_2].
\]

Indeed, the implication from \(\sim_2\) to \(\sim_1\) is clear, since each
irreducible algebraic curve \(T_i\) is path connected with the analytic
topology.\\
%{\color{purple}
For the converse, assume \([p_1]\sim_1 [p_2]\) and let 
\(\gamma:[0,1]\to \mathcal{S}^{d}_{n,c}\) be a path
joining them, continuous in the analytic topology of \( \mathcal{S}^{d}_{n,c} \). Denote by $S_1,\dots,S_\ell$ the irreducible components
of \( \mathcal{S}^{d}_{n,c} \) with respect to the Zariski topology.
Then for every path $\gamma$ as above, there exist
a sequence $i_1,\dots, i_k$ of elements in \(\{1,\dots,\ell\}\)
and real numbers $0=t_0<t_1<\dots<t_k=1$
such that $\gamma([t_{j-1},t_j])\subset S_{i_j}$.

By induction on $k\geq1$ the claim then follows:
If $k=1$, both \([p_1], [p_2]\) belong to the same $S_{i_1}$
and an irreducible algebraic curve exists on $S_{i_1}$ which
contains both.

If $k>1$ then by induction hypothesis both \([p_1],\gamma(t_{k-1})\) 
belong to a connected union of algebraic curves.
Since \(\gamma(t_{k-1}), [p_2]\) both belong to the same component
$S_{i_k}$ the inductive claim also follows,
thus  \([p_1]\sim_2 [p_2]\).

\medskip
%}

Moreover, if \([p_1]\sim_2 [p_2]\) via a connected quasi-projective curve 
\(T\subset\mathcal{S}^{d}_{n,c}\),
we can construct the corresponding flat family as follows:
consider the universal family
\(\mathcal{C}\subset \PP^2\times \PP^{\frac{d(d+3)}{2}}\)
of irreducible degree-\(d\) curves,
and let
\begin{equation}\label{eq_flatfamily}
\pi:\mathcal{X} := \mathcal{C}\times_{\PP^{\frac{d(d+3)}{2}}} T \longrightarrow T
\end{equation}
be the induced family.
Then \(\pi\) is a flat projective family of plane curves of degree \(d\)
with exactly \(n\) nodes and \(c\) cusps as singularities.

%\begin{defin} Let $B_1$ and $B_2$ two equisingular plane curves given by polynomial $p_1$ and $p_2$ respectively, we call them \emph{rigidly isotopic} if there is smooth path in a combinatorial stratum connecting $[p_1]$ to $[p_2]$.
%\end{defin}
%\begin{defin} Let $B_1$ and $B_2$ two equisingular plane curves we call them \emph{rigidly isotopic} if there is a family $\mathcal{B} \stackrel{\pi}{\lra} B$  of equisingular curves
%over a smooth base $B$ such that there exist $b_1$ and $b_2$ in $B$ such that $\pi^{-1}(b_i)=B_i$ for $i=1,2$. 
%\end{defin}
%}
%Hence connected components of strata are the same as rigid isotopy classes.
In a similar way as for surfaces we use the diagram \eqref{functorfamily} %{\color{blue} 
in this context, in particular we refer to the map \eqref{eq_flatfamily} and get:
 
\begin{prop} For each $\sigma \in \Aut(\CC)$ the functor $F_{\sigma}$ descends to a well defined map on rigid isotopic classes of plane curves. 
\end{prop}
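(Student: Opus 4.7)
The plan is to decompose the claim into two steps: first, to show that $F_\sigma$ maps the equisingular stratum $\mathcal{S}^{d}_{n,c}$ into itself, and second, to show that it sends rigid isotopy classes to rigid isotopy classes. The crucial reduction that makes the second step tractable is the equivalence $\sim_1\Leftrightarrow\sim_2$ just established in the preceding discussion: since $F_\sigma$ is a purely algebraic construction, it need not carry analytic paths to analytic paths, but it acts naturally on irreducible algebraic subvarieties, which is exactly what $\sim_2$ uses.

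For the first step, I would verify that the combinatorial datum $(d,n,c)$ is preserved under $F_\sigma$. The degree is obviously invariant under conjugation of the coefficients of $p\in\CC[x,y,z]_d$. Irreducibility of $C_p=\{p=0\}\subset\PP^2$ is equivalent to primality of $p$ in $\CC[x,y,z]$, a property preserved by the ring automorphism induced by $\sigma$. The number of nodes and cusps is a priori an analytic condition, but it is determined algebraically by the Milnor numbers $\mu=1$ and $\mu=2$, which are computed from the Jacobian ideal; since the Jacobian ideal transports functorially under $F_\sigma$, the singular points of $C_{p^\sigma}$ are in bijection with those of $C_p$, with matching local types. Consequently $F_\sigma(\mathcal{S}^{d}_{n,c})\subseteq\mathcal{S}^{d}_{n,c}$, with equality obtained by applying $F_{\sigma^{-1}}$.

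For the second step, assume $[p_1],[p_2]$ lie in the same path-connected component. By $\sim_1\Leftrightarrow\sim_2$ there is a finite chain of irreducible quasi-projective curves $T_1,\dots,T_m\subset \mathcal{S}^{d}_{n,c}$ with $T_i\cap T_{i+1}\neq\varnothing$, containing $[p_1]$ on $T_1$ and $[p_2]$ on $T_m$. Applying the functor yields $T_1^\sigma,\dots,T_m^\sigma$, each again an irreducible quasi-projective curve (irreducibility and dimension being invariant under base change along a field automorphism), contained in $\mathcal{S}^{d}_{n,c}$ by Step 1, and satisfying $T_i^\sigma\cap T_{i+1}^\sigma=(T_i\cap T_{i+1})^\sigma\neq\varnothing$ by compatibility of $F_\sigma$ with fiber products. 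This chain witnesses $[p_1^\sigma]\sim_2[p_2^\sigma]$, and the equivalence $\sim_2\Leftrightarrow\sim_1$ upgrades this to rigid isotopy.

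The principal obstacle I anticipate is the singularity analysis in the first step: one has to argue convincingly that being an ordinary node or an ordinary cusp, a priori an analytic-local condition, is an algebraic invariant stable under $F_\sigma$. This hinges on the characterization via $\mu=1,2$ and on the fact that the Wahl stratum $\mathcal{S}^{d}_{n,c}$, being cut out by algebraic conditions on the coefficients, is defined over $\QQ$, so that $(\mathcal{S}^{d}_{n,c})^\sigma=\mathcal{S}^{d}_{n,c}$ canonically; this identification also secures, in Step 2, that the conjugate chain lies in the same ambient stratum as the original.
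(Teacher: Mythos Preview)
Your proposal is correct and follows essentially the same route as the paper: the paper's argument is the one-line observation that the functorial diagram \eqref{functorfamily} can be applied to the flat family \eqref{eq_flatfamily} over the connected algebraic curve $T\subset\mathcal{S}^{d}_{n,c}$, which is exactly your Step~2 phrased at the level of the family rather than the chain of $T_i$'s. Your Step~1, verifying that $F_\sigma$ preserves the stratum $\mathcal{S}^{d}_{n,c}$ (equivalently, that the stratum is defined over $\QQ$), is left implicit in the paper but is of course needed; your justification via the algebraic nature of the Milnor number is adequate.
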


By this proposition we can define an action of $\Aut(\CC)$ on the set of all rigid isotopic classes of plane curves.  
%\rot{
%And again since each stratum is defined over $\QQ$ as for the case of surfaces we obtain (using the results in  \cite{BCG15} ) a $\Gal$ action on the connected components of equisingular strata of plane curves with only nodes and cusps as singularities. 
%}

%\blau{ 
A consequence of the Theorem \cite[Theorem 11]{PR20} of Parusinski--Rond is that
each such class contains a curve defined over $\bar\QQ$, and we get the following result

\begin{prop} For each $\sigma$ in the kernel of $\Aut(\CC)\to \Gal$ the functor $F_{\sigma}$ preserves all rigid isotopy classes of irreducible plane curves.

Hence $\Gal$ acts on the rigid isotopy classes.
\end{prop}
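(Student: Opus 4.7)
The plan is to combine the well-definedness of the action of $\Aut(\CC)$ on rigid isotopy classes (the previous proposition) with the cited result of Parusinski--Rond \cite[Theorem 11]{PR20}, which guarantees that every rigid isotopy class of irreducible plane curves with prescribed combinatorial type $(d,n,c)$ contains at least one representative $B_0$ defined over $\overline{\QQ}$.

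First I would fix $\sigma \in \ker(\varphi)$, i.e.\ $\sigma|_{\overline{\QQ}} = \mathrm{id}$, and an arbitrary rigid isotopy class $\mathcal{R} \subset \mathcal{S}^{d}_{n,c}$. By Parusinski--Rond, I can pick $[p_0] \in \mathcal{R}$ with $p_0 \in \overline{\QQ}[x,y,z]_d$. Since $\sigma$ acts coordinatewise on polynomials by applying $\sigma$ to their coefficients, and fixes every coefficient of $p_0$, we get $\sigma(p_0) = p_0$, hence the conjugate curve $B_0^{\sigma}$ coincides canonically with $B_0$ as subscheme of $\PP^2$ (this is the third bulletted property in the Remark following Definition \ref{functor_Fsigma}, applied with $k_0 = \overline{\QQ}$). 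Therefore $F_\sigma([p_0]) = [p_0] \in \mathcal{R}$, and since the previous proposition tells us $F_\sigma$ sends the whole class $\mathcal{R}$ to a single class, that class must be $\mathcal{R}$ itself.

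For the second assertion, since the action of $\Aut(\CC)$ on rigid isotopy classes is trivial on $\ker(\varphi)$, it factors through the quotient $\Aut(\CC)/\ker(\varphi) \cong \Gal$, using that $\varphi$ is surjective (as stated at the beginning of this section). This gives the desired action of $\Gal$ on the set of rigid isotopy classes.

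The only non-trivial ingredient is the arithmeticity of representatives, i.e.\ the Parusinski--Rond theorem, which I invoke as a black box; everything else is a formal unwinding of the functoriality of $F_\sigma$ and the fact that $\sigma$ acts on coefficients. The one subtle point I would be careful about is that $B_0^{\sigma} = B_0$ holds as a point of the parameter space $\PP^{d(d+3)/2}$, not merely as an abstract variety, so that it really lies in the same connected component; this is immediate because $\sigma$ fixes the coordinates $[p_0]$ pointwise.
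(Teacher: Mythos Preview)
Your proof is correct and follows essentially the same argument as the paper: both use the previous proposition to reduce to checking a single representative, invoke Parusinski--Rond to find a $\overline{\QQ}$-defined curve in each class, observe that $\sigma$ in the kernel fixes this curve, and conclude that the action factors through $\Gal$. Your write-up is somewhat more explicit (in particular your remark that $B_0^\sigma=B_0$ holds as a point of the parameter space, not just as an abstract variety, is a worthwhile clarification), but the strategy and the key ingredients are identical.
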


\begin{proof} 
The image under 
$F_{\sigma}$ of the rigid isotopy class of a curve $B$ is the isotopy class of the image of any curve rigidly isotopic to $B$ under 
$F_{\sigma}$. Since $\sigma$ is in the kernel of $\Aut(\CC)\to \Gal$, this functor
fixes all curves defined over $\bar\QQ$. Now every isotopy class contains such curve
by  \cite[Theorem 11 ii) \& iii)]{PR20}, $F_{\sigma}$, hence $\sigma$ fixes every isotopy class.
Thus the $\Aut(\CC)$-action descends to a $\Gal$-action on isotopy classes.
\end{proof}

%}

We want to link more tightly the action of $\Gal$ on the connected components of moduli space of surfaces with the action on the  connected components of equisingular strata of plane curves with only nodes and cusps as singularities.  To do so we have to recall the general theory of branched curves, this is the same strategy we used in our previous paper \cite{LP20}. 

\section{branch curves and surfaces isogenous to a product}

There is natural way to produce many singular plane curves with the above mentioned singularities. Indeed, it is enough to consider branch curves of coverings of $\PP^2$. To give such covering we will proceed in the following way: first we consider a surface of general type $S$ with ample canonical class, then we consider the natural immersion in a $\PP^n$ by a multicanonical system. Finally, we project the image generically  to $\PP^2$. This yields a covering $S \longrightarrow \PP^2$. To be more precise let us explain in details this procedure.  

\begin{defin}[cf. \cite{K99}]
\label{kulikov}
Let $B\subset \PP^2$ be an irreducible plane algebraic curve with ordinary cusps
and nodes as the only singularities. The curve $B$ is called \emph{generic branch curve} if there is a finite morphism $p:S\to \PP^2$ with $\deg p\geq3$ such that
\begin{enumerate}
\item
$S$ is a smooth irreducible projective surface,
\item
$p$ is unramified over $\PP\setminus B$,
\item
$p^*(B) = 2R + C$, where $R$ is a smooth irreducible reduced curve
and $C$ is a reduced curve,
\item
the morphism $p_{|R}:R\to B$ coincides with the normalization of $B$.
\end{enumerate}
In this case $p$ is called a \emph{generic covering of the projective plane}.
\end{defin}

%\blau{The definition says in fact explicitly that this is the case.( erase this part ? )}

%\begin{defin}[cf. \cite{Cat86}]
%A \emph{multiple plane} is a pair $(S,p)$ where $S$ is a connected smooth compact
%complex surface $S$ and $p$ is a finite holomorphic map $p:S\to \PP^2$.
%$(S,p)$ is called \emph{Chisini generic} if
%\begin{enumerate}
%\item
%the ramification divisor $R$ of $p$ is smooth and reduced,
%\item
%$p(R)=:B$ has only nodes and ordinary cusps as singularities,
%\item
%$p_{|R}:R\to B$ has degree $1$. ( In particular it is the normalization map.)
%\end{enumerate}
%In this case $B$ is called a \emph{generic branch curve}.
%\end{defin}

\begin{theo}\cite[Theorem 1.1]{CiF11} \label{theo_CiroFlam}
\label{cilibertoflamini}
Let $S \subset \PP^r$ be a smooth, irreducible, projective surface. Then the ramification curve on $S$ of a generic projection of $S$ to $\PP^2$ is smooth and irreducible and the branch curve in the plane is also irreducible and has only nodes and cusps, respectively, corresponding to two simple ramification points and one double ramification point.
\end{theo}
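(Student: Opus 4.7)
The plan is to realize a generic projection to $\PP^2$ as the linear projection from a generic center $\Lambda\subset\PP^r$ of codimension $3$ disjoint from $S$, parametrized by an open subset $U$ of the Grassmannian $\gr(r-3,r)$. For such $\Lambda$ the map $\pi_\Lambda\colon S\to\PP^2$ is a finite morphism of degree $d=\deg S$, and I want to study the ramification divisor $R\subset S$ (the scheme-theoretic locus where $d\pi_\Lambda$ fails to be surjective) together with the branch curve $B=\pi_\Lambda(R)\subset\PP^2$. The overall strategy is a sequence of codimension estimates in $U$: each bad phenomenon, whether non-smoothness of $R$, reducibility of $R$, or a singularity of $B$ worse than an ordinary node or cusp, corresponds to an incidence subvariety of $U\times S^k$ for some small $k$, and one shows that its image in $U$ has positive codimension.

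First I would establish smoothness of $R$ by a Bertini-type transversality argument. The universal ramification locus inside $U\times S$ is the degeneracy locus of a natural map of vector bundles, and a tangent space computation shows that for generic $\Lambda$ its fiber over $\Lambda$ is smooth. Irreducibility of $R$ is more delicate and I would prove it through monodromy: the fundamental group $\pi_1(U)$ acts on the fibers $\pi_\Lambda^{-1}(p)$ of a generic $p\in\PP^2$, and a specialization to sufficiently degenerate centers produces enough transpositions to conclude that this action is the full symmetric group $\sym_d$. Irreducibility of $R$ follows because $R$ parametrizes collisions of two sheets of $\pi_\Lambda$, and $\sym_d$ acts transitively on unordered pairs.

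Next comes the local singularity analysis of $B$. At a smooth point of $R$ where $\pi_\Lambda|_R$ is an immersion, the map has Weierstrass form $(x,y)\mapsto(x^2,y)$, so $B$ is smooth and $\pi_\Lambda^{*}B=2R+C$ locally; these are the simple ramification points. When two simple ramification points lie in the same fiber of $\pi_\Lambda$ and their images meet transversally in $\PP^2$, the corresponding point of $B$ is an ordinary node. When a single ramification point is itself a ramification point of $\pi_\Lambda|_R\colon R\to B$, the local model becomes $(x,y)\mapsto(x^2,y^3+xy)$ and produces an ordinary cusp on $B$. The final step is to show, by codimension counts in $U$, that for generic $\Lambda$ no other configuration appears: no three sheets collapse over a single point, no tacnode arises from two branches of $R$ tangent to each other, no ramification of order $\geq 3$ occurs in any fiber, and $\pi_\Lambda|_R$ has multiplicity at most $2$ everywhere.

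The main obstacle is precisely this last dimension count. One must simultaneously keep track of several points of $S$ together with tangent directions at those points, and show that requiring them all to lie in a common fiber of $\pi_\Lambda$, together with the relevant tangential degeneracy, imposes strictly more conditions than $\dim U$ can absorb. Ciliberto and Flamini handle each case through a careful analysis of the tangential and secant varieties of $S\subset\PP^r$, using in particular the double point formula to force the generic secant variety to have the expected dimension. Once these codimension bounds are in hand, the assertions on smoothness and irreducibility of $R$ and on the allowed local types of singularities of $B$ all follow simultaneously.
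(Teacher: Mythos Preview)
The paper does not prove this theorem at all: it is quoted verbatim as \cite[Theorem 1.1]{CiF11} and used as a black box, with no argument supplied. There is therefore nothing in the paper to compare your proposal against.

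Your sketch is a plausible outline of the Ciliberto--Flamini argument itself (dimension counts on incidence varieties over the Grassmannian of centers, local normal forms for simple and double ramification, exclusion of worse singularities by codimension estimates), but for the purposes of this paper no proof is needed: the authors simply invoke the result. If you wish to match the paper, a one-line reference to \cite{CiF11} suffices.
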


In particular such generic projection of $S$ onto $\PP^2$ has a generic branch curve in the sense of Def. \ref{kulikov}.

\begin{defin} A plane curve $B$ is called a \emph{generic m-canonical branch curve} if there exists a smooth surface $S$ with $mK_S$ very ample and a commutative diagram
\[
\begin{xy}
\xymatrix{%%
 S  \ar[rr]^{\phi_m} \ar[ddrr]_{p} & & \PP^{P_m-1}  \ar@{-->}[dd]%^{\lambda}
\\ 
\\
& & \PP^2
  }
\end{xy}
\]
where $\phi_m$ is the $m$-canonical map,  %$\lambda$ is a projection, 
$p$ is a generic covering and $B$ is the branch locus of $p$.
\end{defin}

We are now able to associate to a surface of general type a plane curve with only nodes and cusps as singularities. 
Among all the surfaces of general type there are some for which it is possible to control accurately the $\Gal$  action on the connected components of their moduli space. These surfaces are the so called \emph{surfaces isogenous to a product} (or SIPs for short). They were introduced by Catanese in \cite{cat00} and the action of  $\Gal$ on their moduli space was investigated in \cite{BCG15}. 

Thanks to their simple definition SIPs are incredibly versatile and they give rise to a large amount of interesting examples.  Let us be more precise:

\begin{defin}\label{def.isogenous} A surface $S$ is said to be \emph{isogenous to a higher product of curves}\index{Surface isogenous to a higher product of curves} if and only if, $S$ is a
quotient $(C_1 \times C_2)/G$, where $C_1$ and $C_2$ are curves of
genus at least two, and $G$ is a finite group acting freely on
$C_1 \times C_2$. (To ease notation we call the surfaces SIP)
\end{defin}
Using the same notation as in Definition \ref{def.isogenous}, let
$S$ be a SIP, and $G^{\circ}:=G
\cap(Aut(C_1) \times Aut(C_2))$. Then $G^{\circ}$ acts on the two
factors $C_1$, $C_2$ and diagonally on the product $C_1 \times
C_2$. If $G^{\circ}$ acts faithfully on both curves, we say that
$S= (C_1 \times C_2)/G$ is a \emph{minimal
realization}. In \cite{cat00}
it is also proven that any
SIP admits a unique minimal realization. 
\medskip

{\bf Assumptions.} In the following we always assume:
\begin{enumerate}
\item Any SIP  $S$  is given by its unique minimal realization;
\item $G^{\circ}=G$, this case is also known as \emph{unmixed type}, see \cite{cat00}.
\end{enumerate}
Under these assumptions we have. \nopagebreak
\begin{prop}~\cite{cat00}\label{isoinv}
Let $S=(C_1 \times C_2)/G$ be a SIP, then $S$ is a minimal surface of general type with the following invariants:
\begin{equation}\label{eq.chi.isot.fib}
\chi(S)=\frac{(g(C_1)-1)(g(C_2)-1)}{|G|},
\quad
e(S)=4 \chi(S),
\quad
K^2_S=8 \chi(S).
\end{equation}
The irregularity of these surfaces is computed by
\begin{equation}\label{eq_irregIsoToProd}
q(S)=g(C_1/G)+g(C_2/G).
\end{equation}
Moreover the fundamental group $\pi_1(S)$ fits in the following short exact sequence of groups
\begin{equation}\label{eq_fundGroupS}
1 \longrightarrow \pi_1(C_1) \times \pi_1(C_2) \longrightarrow \pi_1(S) \longrightarrow G \longrightarrow 1.
\end{equation}
\end{prop}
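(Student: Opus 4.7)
The plan is to exploit the fact that, by freeness of the $G$-action, the quotient map $\pi:C_1\times C_2\to S$ is a connected étale Galois cover of degree $|G|$, and to read off the numerical invariants through their multiplicativity under étale covers together with the product (Künneth-type) formulas for $C_1\times C_2$. First, I would observe that the topological Euler characteristic is multiplicative in étale covers, so
\[
e(S)=\frac{e(C_1\times C_2)}{|G|}=\frac{e(C_1)e(C_2)}{|G|}=\frac{4(g(C_1)-1)(g(C_2)-1)}{|G|}.
\]
Since an étale cover preserves the canonical sheaf (no ramification contribution), $K_{C_1\times C_2}=\pi^*K_S$, which gives $K_{C_1\times C_2}^2=|G|\,K_S^2$; using $K_{C_1\times C_2}=p_1^*K_{C_1}+p_2^*K_{C_2}$ on the product, one computes $K_{C_1\times C_2}^2=2\deg(K_{C_1})\deg(K_{C_2})=8(g(C_1)-1)(g(C_2)-1)$, and the formula for $K_S^2$ follows. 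The formula for $\chi(S)$ then drops out either from Noether's formula $12\chi=K^2+e$ or directly from multiplicativity of the holomorphic Euler characteristic in étale covers combined with Künneth.

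Next, for minimality and general type, I would note that since $g(C_i)\ge 2$ the canonical bundles $K_{C_i}$ are ample, hence $K_{C_1\times C_2}$ is ample; being the pull-back of $K_S$ under a finite étale morphism, ampleness descends, so $K_S$ is ample. In particular $S$ contains no $(-1)$-curves (it has no rational curves at all) and is a minimal surface of general type.

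For the irregularity, I would compute $G$-invariants of global $1$-forms on the cover. Since $\pi$ is étale Galois with group $G$, one has
\[
H^0(S,\Omega_S^1)=H^0(C_1\times C_2,\Omega^1_{C_1\times C_2})^G.
\]
By Künneth, $H^0(C_1\times C_2,\Omega^1_{C_1\times C_2})=H^0(C_1,\Omega_{C_1}^1)\oplus H^0(C_2,\Omega_{C_2}^1)$; the unmixed assumption $G=G^\circ$ is crucial here, since it guarantees that $G$ preserves each summand, so the $G$-invariants split as
\[
H^0(C_1,\Omega_{C_1}^1)^G\oplus H^0(C_2,\Omega_{C_2}^1)^G\;=\;H^0(C_1/G,\Omega^1_{C_1/G})\oplus H^0(C_2/G,\Omega^1_{C_2/G}),
\]
the last identification being the standard description of holomorphic $1$-forms on a quotient curve as $G$-invariant forms upstairs. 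Taking dimensions yields $q(S)=g(C_1/G)+g(C_2/G)$.

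Finally, the fundamental group sequence is the long exact sequence of homotopy groups associated with the connected étale Galois cover $\pi:C_1\times C_2\to S$ with deck group $G$, together with $\pi_1(C_1\times C_2)=\pi_1(C_1)\times\pi_1(C_2)$. The main subtlety throughout, and the only place where the unmixed hypothesis is genuinely needed, is the splitting of $G$-invariants into the two factors in the irregularity computation; everything else is a direct consequence of étale multiplicativity of Euler characteristics and of the Künneth decomposition on $C_1\times C_2$.
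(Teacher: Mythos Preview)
Your argument is correct and is essentially the standard proof of this proposition. Note, however, that the paper does not supply its own proof: the proposition is quoted verbatim from \cite{cat00} and left unproved, so there is nothing in the paper to compare your argument against beyond the citation. Your use of multiplicativity of $e$, $K^2$ and $\chi$ under the free quotient, together with the K\"unneth decomposition on $C_1\times C_2$, is exactly the approach taken in Catanese's original paper; the identification $H^0(C_i,\Omega^1_{C_i})^G\cong H^0(C_i/G,\Omega^1_{C_i/G})$ holds even though the individual $G$-actions on the $C_i$ may be ramified, which you implicitly use and which is indeed a classical fact. Your remark that the unmixed hypothesis enters only in the irregularity computation (to prevent $G$ from swapping the two K\"unneth summands) is also accurate.
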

The most important property of surfaces isogenous to a product is their weak rigidity property.
\begin{theo}~\cite[Theorem 3.3, Weak Rigidity Theorem]{cat04}
\label{weak}
Let $S=(C_1 \times C_2)/G$ be a surface isogenous to a higher
product of curves. Then every surface with the same
\begin{itemize}
\item topological Euler number and
\item fundamental group
\end{itemize}
is diffeomorphic to $S$. The corresponding  moduli space
$\mathcal{M}^{top}(S) = \mathcal{M}^{\it diff}(S)$ of surfaces
(orientedly) homeomorphic (resp. diffeomorphic) to $S$ is either
irreducible and connected or consists of two irreducible connected
components exchanged by complex conjugation.
\end{theo}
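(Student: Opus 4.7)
The plan is to use the exact sequence \eqref{eq_fundGroupS} to pass from any competitor surface $S'$ to an étale cover $\widetilde{S'}$ whose fundamental group is a product of surface groups, and then apply harmonic-map rigidity to force $\widetilde{S'}$ itself to be a product of curves. More precisely, given a smooth compact Kähler surface $S'$ with $\pi_1(S')\cong \pi_1(S)$ and $e(S')=e(S)$, I would identify the canonical normal subgroup $N:=\pi_1(C_1)\times \pi_1(C_2)$ of $\pi_1(S')$ as the kernel of the projection onto $G$ in \eqref{eq_fundGroupS}, and take the corresponding étale Galois cover $\widetilde{S'}\to S'$ with group $G$. Then $\pi_1(\widetilde{S'})\cong \pi_1(C_1)\times \pi_1(C_2)$ and $e(\widetilde{S'})=|G|\,e(S')=4(g(C_1)-1)(g(C_2)-1)$ by \eqref{eq.chi.isot.fib}.

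The heart of the argument is to show that $\widetilde{S'}$ is biholomorphic to a product $C_1'\times C_2'$ with $g(C_i')=g(C_i)$. For each $i=1,2$ the quotient homomorphism $\pi_1(\widetilde{S'})\twoheadrightarrow \pi_1(C_i)$ defines a classifying map $\widetilde{S'}\to C_i$, which I would replace by its harmonic representative via Eells--Sampson and then push through Siu's rigidity theorem for harmonic maps from Kähler manifolds into strongly negatively curved targets, concluding that this representative is holomorphic (after possibly complex-conjugating the target). This yields a surjective holomorphic fibration $f_i:\widetilde{S'}\to C_i'$ with $g(C_i')=g(C_i)$. An Euler-characteristic balance using the prescribed value $e(\widetilde{S'})=4(g(C_1)-1)(g(C_2)-1)$ together with the multiplicativity of $e$ in smooth fibrations then forces every fibre of $f_i$ to be smooth of genus $g(C_{3-i})$, so the pair $(f_1,f_2):\widetilde{S'}\to C_1'\times C_2'$ is a finite unramified morphism between smooth projective surfaces with equal Euler numbers, hence of degree one and hence an isomorphism.

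The induced $G$-action on the product $C_1'\times C_2'$ is holomorphic and free by construction; moreover it is of unmixed product type, because the two fibrations $f_i$ are canonically attached to the two normal factors $\pi_1(C_i)\subset \pi_1(\widetilde{S'})$, which under the assumption $G=G^{\circ}$ are not permuted by $G$. This exhibits $S'=(C_1'\times C_2')/G$ as an SIP of the same unmixed combinatorial type as $S$, and therefore diffeomorphic to $S$. For the count of connected components of the moduli space, once this discrete datum (the topological type of the $G$-action on each $C_i$) is fixed, the remaining complex parameters vary in the product $T^{G}_{g_1}\times T^{G}_{g_2}$ of $G$-equivariant Teichmüller spaces, each of which is a complex ball, so the ensuing moduli stratum is irreducible after quotienting by the relative mapping class group; the only further ambiguity is complex conjugation on $S$, which may or may not be realised by a biholomorphism and therefore gives either one or two irreducible components exchanged by conjugation. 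The main obstacle will be the rigidity step of the middle paragraph: manufacturing honest holomorphic fibrations out of the bare group-theoretic classifying data is exactly where the Kähler hypothesis and Siu's theorem are indispensable, and one has to verify carefully that $(f_1,f_2)$ has degree one rather than a higher étale degree before the final isomorphism can be claimed.
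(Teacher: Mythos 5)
This statement is the Weak Rigidity Theorem, which the paper imports verbatim from Catanese \cite{cat04} and does not prove; so the only meaningful comparison is with Catanese's own argument, and your sketch does follow its overall architecture: pass to the \'etale $G$-cover determined by the kernel in the sequence \eqref{eq_fundGroupS}, produce two fibrations onto curves from the two surface-group quotients, use the Euler number to force both fibrations to be smooth with the expected fibre genera, conclude that the cover is a product, and then handle the moduli count via $G$-equivariant Teichm\"uller theory, with complex conjugation accounting for the possible second component. The one genuinely different choice is the engine for producing the fibrations: you invoke Eells--Sampson plus Siu's harmonic-map rigidity, whereas Catanese's proof in \cite{cat00} runs through the Castelnuovo--de Franchis/isotropic subspace theorem (and the Siu--Beauville--Catanese correspondence between fibrations onto curves of genus $g\geq 2$ and surjections of $\pi_1$ onto $\Pi_g$ with finitely generated kernel). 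Both routes are legitimate; note only that Siu's ``strong negative curvature'' hypothesis is a statement about higher-dimensional targets, so for Riemann-surface targets you must quote the one-dimensional factorization results (Siu 1987, Jost--Yau, Carlson--Toledo) or the Hodge-theoretic version.

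Three steps are asserted where real work is needed. First, the identification of $N=\pi_1(C_1)\times\pi_1(C_2)$ inside the abstract group $\pi_1(S')$: one must show this finite-index normal subgroup is canonically determined (up to the relevant ambiguity) by the isomorphism type of $\pi_1(S)$ alone; this is a nontrivial group-theoretic point that Catanese proves and that your sketch takes for granted. Second, the bare Siu--Beauville statement only yields a fibration onto a curve of genus \emph{at least} $g(C_i)$; to get equality, and to know the induced map on $\pi_1$ is the given projection, you need the refined version with finitely generated kernel. Third, the ``Euler-characteristic balance'' is the Zeuthen--Segre inequality $e(\widetilde{S'})\geq 4(g_1-1)(g(F)-1)$, and before equality can bite you must check that $f_2$ is non-constant on the general fibre of $f_1$ (so that $g(F)\geq g_2$); the cleanest finish is then Riemann--Hurwitz on a fibre to see that $(f_1,f_2)$ has degree one, rather than the unramifiedness claim you make. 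All of these gaps are fillable and are filled in \cite{cat00}, so the proposal is a correct outline rather than a complete proof.
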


%Take this out ?!
%A first consequence of the weak rigidity lemma is the following Lemma proved in \cite{LP20}
%(which lemma? Lemma 3.7 why not two? G is abelian with all elements of order 2)

%\begin{lem}
%\label{conj}
%Let $S$ be a SIP as before, than its fundamental group and its Euler number determine a unique irreducible
%and connected component of the moduli space of surfaces of general type.
%\end{lem}

We can now link SIP and branch curves as done in \cite{LP20}. In this contest we need to  be more careful in choosing the pluricanonical embeddings because we have no specific restrictions on the group $G$  as we had in \cite{LP20}. To this end we have change the hypothesis of \cite[Lemma 3.6]{LP20} and the claim is proved for the $3$-canonical embedding  instead of $2$-canonical. We get an analogue theorem to \cite[Theorem 3.12]{LP20}.

\begin{theo}\label{theo_ZP} Let $S_1$ and $S_2$ be two surfaces isogenous to a product  with the same Euler number and $\pi_1(S_1) \neq \pi_1(S_2)$. Then for $m \geq 3$  the corresponding generic m-canonical branch curves $(\PP^2, B_1)$, $(\PP^2, B_2)$ are a Zariski pair.
\end{theo}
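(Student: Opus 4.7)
The proof amounts to verifying the two conditions of Definition~\ref{def_ZP} for the pair $(B_1,B_2)$, following the template of \cite[Theorem~3.12]{LP20} but now in the $m$-canonical regime for $m\geq 3$, which is made available by Bombieri's theorem together with Theorem~\ref{theo_CiroFlam}.

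For condition~(1) the strategy is to reduce the tubular-neighbourhood requirement to equality of combinatorial data via \cite[Remark~3]{ACT10}. Since each $B_i$ is irreducible with only nodes and cusps, the combinatorial datum reduces to the triple $(d_i,n_i,c_i)$. The standard Moishezon--Teicher / Ciliberto--Flamini formulas for a generic $m$-canonical projection express each of $d_i$, $n_i$, $c_i$ as a polynomial in $K_{S_i}^2$ and $c_2(S_i)$, with parameter $m$. Proposition~\ref{isoinv} yields $K_{S_i}^2=8\chi(S_i)$ and $c_2(S_i)=4\chi(S_i)$, so the hypothesis $e(S_1)=e(S_2)$ forces $(d_1,n_1,c_1)=(d_2,n_2,c_2)$, and condition~(1) follows.

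For condition~(2) I argue by contradiction. A homeomorphism $f\colon \PP^2 \to \PP^2$ with $f(B_1)=B_2$ would induce an isomorphism $\pi_1(\PP^2\setminus B_1)\cong \pi_1(\PP^2\setminus B_2)$ carrying the conjugacy class of meridians on one side to that on the other (this class is well defined thanks to the irreducibility of each $B_i$). The covering $p_i\colon S_i \to \PP^2$ corresponds to an index-$\deg p_i$ subgroup $H_i \subset \pi_1(\PP^2\setminus B_i)$, distinguished by the property that the associated permutation representation into $\perm_{\deg p_i}$ sends meridians to transpositions. Since $\deg p_i = m^2 K_{S_i}^2 \geq 72$ for $m\geq 3$ (using $K_{S_i}^2 = 8\chi(S_i) \geq 8$), Kulikov's solution of the Chisini conjecture \cite{K08} guarantees that $H_i$ is the unique such subgroup; hence $f$ restricts to an isomorphism $H_1\cong H_2$ preserving meridians. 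Quotienting by the normal closure of squares of meridians---which is intrinsic to the data (group, meridian class, Chisini subgroup) and reproduces $\pi_1(S_i)$, because meridians of the ramification curve $R_i$ lift doubled meridians of $B_i$---one obtains $\pi_1(S_1)\cong \pi_1(S_2)$, contradicting the standing hypothesis $\pi_1(S_1)\neq \pi_1(S_2)$.

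The main obstacle here is not conceptual but technical: one must check that the $m$-canonical construction genuinely produces a generic covering in the sense of Definition~\ref{kulikov} for every $m\geq 3$ without the restrictions on the group $G$ imposed in \cite{LP20}. This is exactly where the $3$-canonical variant of \cite[Lemma~3.6]{LP20} alluded to in the excerpt is needed, coupled with Theorem~\ref{theo_CiroFlam} to ensure that the projection from a sufficiently general center produces only nodes and cusps. Once this is in place, verifying that the reconstruction $\pi_1(S_i)=H_i/\langle\!\langle\, \mu^2 : \mu \text{ a meridian of } B_i\rangle\!\rangle$ is compatible with homeomorphisms of the pair---so that the meridian class and hence the Chisini subgroup transport along $f$---is routine, and closes the argument.
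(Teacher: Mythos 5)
Your proposal is correct and follows essentially the same route as the paper, which proves Theorem~\ref{theo_ZP} by adapting \cite[Theorem 3.12]{LP20} to the $m$-canonical setting for $m\geq 3$: condition (1) via equality of the combinatorial data $(d,n,c)$, which Lemma~\ref{lem_3caninv} and Proposition~\ref{isoinv} show depend only on the Euler number, and condition (2) via the fundamental group of the complement, the meridian class, and Kulikov's solution of the Chisini conjecture to recover the covering and hence $\pi_1(S_i)$. You also correctly identify the one genuinely new technical point, namely replacing the $2$-canonical hypothesis of \cite[Lemma 3.6]{LP20} by the $3$-canonical one so that no restriction on the group $G$ is needed.
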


%The existence of such pairs of surfaces is well documented in the literature. 
Now that we have a link between Zariski pairs and SIPs we want to add arithmetic information to this setting.
In \cite{BCG15} the three authors exploited the weak rigidity theorem to prove the following result.

\begin{theo}{\cite[Theorem 1.1]{BCG15}}\label{theo_iso} If $\sigma \in \Gal$ is not in the conjugacy class of the complex conjugation, then there exists a surface isogenous to a product $X$ such that $X$ and the Galois conjugate varieties $X^{\sigma}$ have non-isomorphic fundamental group. 
\end{theo}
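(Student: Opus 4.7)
The plan is to exhibit an SIP $X=(C_1\times C_2)/G$ such that $\pi_1(X)\not\cong\pi_1(X^\sigma)$, by combining the Weak Rigidity Theorem \ref{weak} with a Belyi-type faithfulness statement for the $\Gal$-action on covering data. The first step is to encode a minimal unmixed SIP by its ramification data: a pair of generating vectors $(\mathbf v_1,\mathbf v_2)$ of $G$ describing the two Galois covers $C_i\to C_i/G$. By the extension \eqref{eq_fundGroupS}, $\pi_1(X)$ is determined up to isomorphism by this combinatorial datum, and hence the $\Gal$-action on moduli descends to an action on equivalence classes of such vectors.

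Next, for $\sigma$ outside the conjugacy class of complex conjugation $c$, I would invoke faithfulness of the $\Gal$-action on Hurwitz-type moduli of ramified covers of $\PP^1$ (a consequence of Belyi's theorem): using a sufficiently rich Belyi cover, produce curves $C_1,C_2$ defined over $\overline{\QQ}$ endowed with a faithful action of a finite group $G$, whose associated generating vectors are sent by $\sigma$ to a class that is neither equivalent to the original nor to its complex conjugate. Forming $X=(C_1\times C_2)/G$, one obtains an SIP whose image $X^\sigma$ under $F_\sigma$ lies in a connected component of moduli distinct from both that of $X$ and that of $\overline X$. By the Weak Rigidity Theorem \ref{weak}, the connected components of $\mathcal{M}^{\mathrm{top}}(X)$ are classified—up to the pairing identifying a component with its complex conjugate—by the pair $(e(X),\pi_1(X))$. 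Since $F_\sigma$ preserves the Euler number and by construction $X^\sigma$ is not in the component of $X$ or of $\overline X$, one concludes that $\pi_1(X^\sigma)\not\cong\pi_1(X)$.

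The main obstacle lies in producing an explicit triple $(G,C_1,C_2)$ whose underlying Belyi datum is arithmetically rich enough to detect the prescribed $\sigma$. This is the technical heart of \cite{BCG15}, solved by choosing a triangle-group quotient whose moduli of markings carries a residually faithful $\Gal$-action; the subtle point is to guarantee that the $\sigma$-moved generating vector cannot be compensated either by an automorphism of $G$ or by complex conjugation—since otherwise rigidity would return an isomorphic fundamental group and the argument would collapse. The hypothesis that $\sigma$ be outside the conjugacy class of $c$ is precisely what rules out this compensation mechanism.
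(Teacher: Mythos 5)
First, a remark on the comparison: the paper does not prove this statement at all --- it is imported verbatim as \cite[Theorem 1.1]{BCG15} --- so the only benchmark is the proof in that reference. Your reduction step is sound: since $F_\sigma$ preserves the Euler number, the Weak Rigidity Theorem \ref{weak} implies that if $\pi_1(X^\sigma)\cong\pi_1(X)$ then $X^\sigma$ lies in $\mathcal{M}^{\it diff}(X)$, which consists of at most the component of $X$ and the component of $\overline{X}$; hence it suffices to produce an SIP $X$ for which $X^\sigma$ lands in a third component. (A minor slip: the extension \eqref{eq_fundGroupS} shows the generating-vector datum determines $\pi_1(X)$, but what lets the $\Gal$-action descend to classes of generating vectors is the Riemann existence theorem identification of components of moduli with such classes, not the extension itself.)

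The genuine gap is that the entire content of the theorem is concentrated in the step you phrase as ``produce curves $C_1,C_2$ \dots whose associated generating vectors are sent by $\sigma$ to a class that is neither equivalent to the original nor to its complex conjugate,'' and the tool you invoke for it --- Belyi-type faithfulness of the $\Gal$-action on dessins or Hurwitz data --- does not deliver this. Belyi faithfulness produces, for each $\sigma\neq 1$, \emph{some} curve $C$ with $C^\sigma\not\cong C$; it gives no control ruling out $C^\sigma\cong\overline{C}$, it does not convert non-isomorphism of curves (which is not a discrete invariant) into a change of connected component of the relevant stratum, and it says nothing about $\pi_1$ of the quotient surface. Indeed, the statement that for \emph{every} $\sigma\neq 1$ there exist Beauville surfaces with $X^\sigma$ not homeomorphic to $X$ is the considerably harder theorem of Gonz\'alez-Diez and Jaikin-Zapirain, and it is not what \cite{BCG15} uses here. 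Their actual argument for Theorem 1.1 is different in its arithmetic input: they take one-parameter families of ``very special hyperelliptic curves'' $C_a$ together with an action of a fixed abelian group, form $X_a=(C_a\times D)/G$ with $D$ a triangle curve, and prove a structure theorem to the effect that the abstract group $\pi_1(X_a)$ determines the branch-point configuration of $C_a\to C_a/G\cong\PP^1$, hence the parameter $a$, up to complex conjugation and a finite group of projectivities. A field-theoretic lemma characterizing the conjugacy class of complex conjugation then supplies, for any $\sigma$ outside that class, a value of $a$ with $\sigma(a)$ avoiding the finite ambiguity set, whence $\pi_1(X_a)\not\cong\pi_1(X_a^\sigma)$. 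Without either that structure theorem or an equally concrete substitute, your proposal assumes the conclusion at its key step.
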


Moreover, they could prove even a much stronger theorem.

\begin{theo}{\cite[Theorem 1.3]{BCG15}}\label{theo_ffaGal} The absolute Galois group $\Gal$ acts faithfully on the set of connected components of the (coarse) moduli space of surfaces of general type. 
\end{theo}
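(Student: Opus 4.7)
The plan is to show that every $\sigma \in \Gal \setminus \{\mathrm{id}\}$ sends some connected component of the moduli space of surfaces of general type to a different one. I split according to whether $\sigma$ is conjugate in $\Gal$ to complex conjugation $c$ or not, since Theorem~\ref{theo_iso} explicitly excludes the latter case.

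For $\sigma$ not conjugate to $c$, Theorem~\ref{theo_iso} directly provides a SIP $X$ with $\pi_1(X) \not\cong \pi_1(X^\sigma)$. Two surfaces of general type in the same connected component of the coarse moduli space are deformation equivalent, and hence diffeomorphic by Ehresmann's fibration theorem, so they share the same fundamental group. Therefore $[X]$ and $[X^\sigma]$ lie in distinct components, and $\sigma$ acts non-trivially on the set of components.

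For $\sigma$ conjugate to $c$, it suffices, after replacing $\sigma$ by a conjugate, to exhibit a single surface $S$ such that $[S]$ and $[\bar S]$ belong to different components. I would construct $S$ as a SIP $(C_1 \times C_2)/G$ of Beauville type, determined by a pair of spherical systems of generators of $G$. By the Weak Rigidity Theorem~\ref{weak}, the components of the corresponding moduli stratum are exactly the orbits of such pairs under the combined action of $\Aut(G)$ and the braid groups on the two triples. Complex conjugation acts on these combinatorial data precisely by inverting each generator, so it is enough to produce a pair not equivalent to its inverse under this action: the associated $S$ and $\bar S$ then lie in distinct components by weak rigidity.

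The main obstacle is the explicit construction in Case~2. The strategy I would follow is to work with suitable finite groups, for example $\PSL_2(p)$ or an alternating group $A_n$ for appropriate $n$, where the $\Aut(G)$-action on generating triples is sufficiently well understood, and to exhibit invariants (for instance the multiset of conjugacy classes of the generators, or character-theoretic data) that separate a chosen system from its inverse. Verifying that such invariants actually survive the action of $\Aut(G)$ and of the braid groups is the technical heart of the argument and requires genuinely group-theoretic input beyond the formalism developed in Section~2.
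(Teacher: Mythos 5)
This statement is not proved in the paper at all: it is quoted verbatim as \cite[Theorem 1.3]{BCG15}, so there is no internal proof to compare against. Measured against the actual argument in [BCG15], your outline is essentially the right one: the dichotomy between $\sigma$ conjugate to complex conjugation and $\sigma$ not, the use of Theorem~\ref{theo_iso} plus the invariance of $\pi_1$ under deformation equivalence in the first case, and the reduction of the second case to exhibiting a surface not deformation equivalent to its conjugate, is exactly how the cited proof is structured.

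The one genuine gap is the part you yourself flag: in Case~2 you never actually produce a surface $S$ with $[S]\neq[\bar S]$, and without it the argument does not handle complex conjugation, which is the only case not already covered by Theorem~\ref{theo_iso}. This is not a formality --- the Weak Rigidity Theorem~\ref{weak} as stated only says the relevant moduli space is connected \emph{or} has two components exchanged by conjugation, so you must rule out the first alternative for at least one example. The standard way to close this (and the one implicit in the sources this paper leans on, \cite{BCG06} and \cite{cat04}) is to take a \emph{rigid} SIP, i.e.\ a Beauville surface: there the connected component of $[S]$ is a finite set of points determined by the $\Aut(G)$- and Hurwitz-orbits of the pair of spherical generating systems, and one verifies for a concrete group (e.g.\ the examples with $\mathfrak{A}_n$ or $\PSL_2(p)$ in \cite{BCG06}) that a chosen pair is not equivalent to the pair of inverted generators. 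Your proposed invariants (multisets of conjugacy classes of the generators) are indeed the ones that work, but until such a group-theoretic example is exhibited and checked to survive the $\Aut(G)$ and braid actions, the proof of faithfulness is incomplete precisely at the conjugacy class of complex conjugation.
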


%\begin{rem}
%We now link branch curves with SiPs, this is done in \cite{LP20}. Indeed, by \cite{LP20} we know that for $m\geq 2$ there exists pluricanonical embeddings $\iota_{m} \colon S \longrightarrow \PP^{P_m-1}$ . Then the generic linear projection of the pluricanonical image  to $\PP^2$ yields a generic  covering $p\colon S\longrightarrow \PP^2$. This is a branched cover with branch curve $B$. 
%\end{rem}

We conclude this preliminary section with a consideration of numerical invariants. Since these numerical invariants of a SIP are completely determined by the Euler number, which is a fortiori determined by the genera of the curves involved and by the order of the group $G$, we are able to determine the degree $d$, the number $n$ of nodes, and the number $c$ of cusps of the branch curve.
Indeed repeating the same argument as for the proof of \cite[Lemma 3.10]{LP20} we have:

\begin{lem}\label{lem_3caninv} Let $(\PP^2,B)$ be a generic 3-canonical branch curve of regular surfaces isogenous to a product $S$  then
\begin{equation}\label{eq_deg} d=\deg B=30c_1(S)^2,
\end{equation}
\begin{equation}\label{eq_cusps} c=137c_1(S)^2-c_2(S)
\end{equation}
\begin{equation}\label{eq_nodes} n=450(c_1(S)^2)^2-237c_1(S)^2+c_2(S).
\end{equation}
%\begin{equation}\label{eq_eulernormalbranch} \chi_{top}(R)=-110c_1(S)^2
%\end{equation}
\end{lem}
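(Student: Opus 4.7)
The plan is to follow the strategy of \cite[Lemma 3.10]{LP20}, simply substituting the $3$-canonical polarisation $H=3K_S$ for the $2$-canonical one, so that $H^2=9\,c_1(S)^2$ and $H\cdot K_S=3\,c_1(S)^2$ throughout. First, I would read off the degree directly from the ramification formula: since $p$ is unramified outside $B$, one has $R=K_S-p^*K_{\PP^2}=K_S+3H$, and therefore
\[
d = R\cdot H = (K_S+3H)\cdot H = 3\,c_1(S)^2 + 27\,c_1(S)^2 = 30\,c_1(S)^2,
\]
which proves \eqref{eq_deg}.

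Next, to extract a first linear relation between $n$ and $c$, I would match two genus computations. By Theorem~\ref{theo_CiroFlam}, $R$ is smooth and $p|_R\colon R\to B$ is the normalisation, so $g(B)=g(R)$; adjunction on $S$ gives $2g(R)-2=(K_S+3H)(2K_S+3H)=110\,c_1(S)^2$. Comparing with the classical genus formula $g(B)=\binom{d-1}{2}-n-c$ for an irreducible plane curve with only nodes and cusps and substituting $d=30\,c_1(S)^2$ yields
\[
n+c = 450\bigl(c_1(S)^2\bigr)^2 - 100\,c_1(S)^2.
\]
To decouple $n$ from $c$, I would then compute $e(S)=c_2(S)$ by stratifying $\PP^2$ into $\PP^2\setminus B$, the smooth locus of $B$, the nodes, and the cusps; over these four strata the generic cover has $d_1:=H^2$, $d_1-1$, $d_1-2$, and $d_1-2$ set-theoretic preimages respectively (the last count reflecting the fact that a cusp of $B$ sits above a single double ramification point of $R$, whereas at a node $R$ has two smooth disjoint lifts). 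Combined with $e(B)=2-2g(B)-n$, additivity of the Euler characteristic gives $e(S)=3d_1-e(B)-(n+c)$; inserting $d_1=9\,c_1(S)^2$ and $e(B)=-110\,c_1(S)^2-n$ produces $c=137\,c_1(S)^2-c_2(S)$, and the formula for $n$ follows by subtraction from the previous relation.

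The only delicate point is the Euler characteristic bookkeeping in the third step: one must verify that $p^{-1}(B_{\mathrm{smooth}})\to B_{\mathrm{smooth}}$ is an honest topological covering of degree $d_1-1$ and that the preimages of the singular points of $B$ contribute exactly the expected number of points, with $R$ and $C$ disjoint above the singular locus of $B$. Both assertions amount to reading off the local normal form of the generic cover at simple and double ramification points, which is precisely what Theorem~\ref{theo_CiroFlam} supplies; once the local picture is in place, the computation is entirely routine.
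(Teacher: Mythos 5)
Your computation is correct and is exactly the argument the paper has in mind: the paper itself gives no details, merely stating that one repeats the proof of \cite[Lemma 3.10]{LP20} with the $2$-canonical polarisation replaced by $H=3K_S$, which is precisely the substitution you carry out (ramification formula for $d$, adjunction plus the Pl\"ucker genus formula for $n+c$, and the Euler-number stratification of the cover for $c$). All three formulas check out, so your proposal matches the intended proof and supplies the details the paper omits.
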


%\begin{lem}\label{lem_2caninv} Let $(\PP^2,B)$ be a generic 2-canonical branch curve of regular surfaces isogenous to a product $S$  then
%\begin{equation}\label{eq_deg} d=\deg B=14c_1(S)^2,
%\end{equation}
%\begin{equation}\label{eq_cusps} c=68c_1(S)^2-c_2(S)
%\end{equation}
%\begin{equation}\label{eq_nodes} n=98(c_1(S)^2)^2-117c_1(S)^2+c_2(S)
%\end{equation}
%\begin{equation}\label{eq_eulernormalbranch} \chi_{top}(R)=-56c_1(S)^2
%\end{equation}
%\end{lem}

%%%%%%%%%%%%%%%%%%%%%%%%%%%%%%%%%%%%%%%%%%%%%%%%%%
\section{Proof of the main theorems}

To prepare the proofs we combine
the results in Theorem \ref{theo_ZP} and Theorem  \ref{theo_iso} to give examples of arithmetic Zariski pairs as follows.

\begin{theo}\label{theo_AZP} Let $S$ and $S^{\sigma}$ be two surfaces isogenous to a product as in  Theorem \ref{theo_iso} then they have  the same Euler number but $\pi_1(S) \neq \pi_1(S^{\sigma})$. Then for $m \geq 3$  the corresponding generic m-canonical branch curves $(\PP^2, B)$, $(\PP^2, B^{\sigma})$ are arithmetic Zariski pair.
\end{theo}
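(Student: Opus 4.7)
The plan is to verify the two defining properties of an arithmetic Zariski pair for the pair $(B, B^\sigma)$: first, that they form a Zariski pair (by invoking Theorem \ref{theo_ZP}), and second, that $B^\sigma$ is indeed the $\sigma$-conjugate of $B$ as a plane curve. The Zariski-pair part is essentially immediate from the tools already established, while the arithmetic part reduces to checking that the generic $m$-canonical branch curve construction is compatible with the functor $F_\sigma$.

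First I would check that $S^\sigma$ is again a surface isogenous to a product with the same numerical invariants as $S$. If $S = (C_1 \times C_2)/G$ is the minimal realization, then by compatibility of $F_\sigma$ with products and with finite group actions (items 4 and 5 of the Remark following Definition \ref{functor_Fsigma}), we have $S^\sigma = (C_1^\sigma \times C_2^\sigma)/G$. Since $g(C_i^\sigma) = g(C_i)$, the formulas \eqref{eq.chi.isot.fib} yield the same Euler number. Combined with the hypothesis $\pi_1(S) \neq \pi_1(S^\sigma)$ furnished by Theorem \ref{theo_iso}, Theorem \ref{theo_ZP} immediately implies that the generic $m$-canonical branch curves of $S$ and $S^\sigma$ form a Zariski pair for every $m \geq 3$.

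Next I would verify the arithmetic condition by tracking $F_\sigma$ through the branch-curve construction. The $m$-canonical embedding $\phi_m : S \hookrightarrow \PP^{P_m-1}$ is determined by $mK_S$ together with a basis of $H^0(S, mK_S)$; applying $F_\sigma$ produces $\phi_m^\sigma : S^\sigma \hookrightarrow \PP^{P_m-1}$, which is an $m$-canonical embedding of $S^\sigma$. The locus of generic projection centers (those satisfying the conclusion of Theorem \ref{theo_CiroFlam}) is a nonempty Zariski open subset of the appropriate Grassmannian, cut out by algebraic conditions defined over $\QQ$, and hence stable under $F_\sigma$. Consequently, if $\Lambda$ is a generic center yielding the generic branch curve $B \subset \PP^2$, then $\Lambda^\sigma$ is a generic center for $\phi_m^\sigma(S^\sigma)$, and functoriality of $F_\sigma$ with respect to morphisms identifies the resulting branch curve with $B^\sigma$. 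Thus $B$ and $B^\sigma$ are defined by $\sigma$-conjugate polynomials, so the pair is arithmetic.

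The step demanding the most care is the second one: ensuring that every ingredient of the construction --- the pluricanonical system, the choice of projection center, the composition, and the computation of the branch locus as the image of the ramification divisor --- is compatible with $F_\sigma$, and in particular that the open condition defining "generic" is itself defined over $\QQ$ so that the $\sigma$-conjugate of a generic center remains generic. Once this functoriality is in place, the theorem follows by combining the Zariski-pair conclusion of Theorem \ref{theo_ZP} with the non-isomorphism of fundamental groups provided by Theorem \ref{theo_iso}.
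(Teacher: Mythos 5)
Your proposal is correct and follows essentially the same route as the paper: the Zariski-pair property is quoted from Theorem \ref{theo_ZP}, and the arithmetic property is obtained by pushing the whole branch-curve construction through the functor $F_{\sigma}$, concluding that $\pi^{\sigma}$ is again a generic projection with generic branch curve $B^{\sigma}$. The only cosmetic difference is that the paper secures $\sigma$-equivariance by first invoking the proof of \cite[Theorem 1.3]{BCG15} to note that the field of moduli (hence a field of definition) is a number field, so that the $m$-canonical model and the branch curve are defined over $\overline{\QQ}$, whereas you argue directly that the genericity condition on the projection center is an open condition defined over $\QQ$ and hence preserved by $F_{\sigma}$ --- a point the paper leaves implicit and which your write-up actually makes more explicit.
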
 

\begin{proof} By  Theorem \ref{theo_ZP}  $(\PP^2, B)$, $(\PP^2, B^{\sigma})$ are  Zariski pair. We only have to prove that the pair is arithmetic. By the proof of \cite[Theorem 1.3]{BCG15}  the field of moduli is a number field.
Also a field of definition is a number field, though a bigger one in general,  therefore we know that the $m$-canonical model $X$ of $S$ and $X^{\sigma}$ for $S^{\sigma}$ in $\PP^{P_m-1}$ is defined over $\overline{\QQ}$. By \cite[Theorem 1.1]{CiF11}  we can map $S$ 
%and $S^{\sigma}$ 
to $\PP^2$ by a generic projection $\pi$ with branch curve $B$ 
%and $B^{\sigma}$ respectively 
that is irreducible with at most nodes and ordinary cusps as singularities and again defined over a number field. In particular, this allows us to apply the functor $F_{\sigma}$ defined in \ref{functor_Fsigma} and to obtain the following diagram.
\[
\begin{xy}
\xymatrix{%%
 X  \ar[dd]_{\pi} & & X^{\sigma}  \ar[dd]^{\pi^{\sigma}}
\\
 \ar[rr]^{F_{\sigma}} & &
\\
B \subset \PP^2& & B^{\sigma} \subset \PP^2
  }
\end{xy}
\]
This completes the proof, since also $\pi^\sigma$ is a generic projection and 
$B^\sigma$ is a generic branch curve.
 \end{proof}

We are now in the position to prove Theorems \ref{thm main 1} and \ref{thm main 3}, that we recall for reader's convenience. 

\begin{theo}  
There is a faithful action of the absolute Galois group $\Gal$ on the set of connected components of equisingularity strata of plane irreducible curves with at most nodal and ordinary cuspidal singularities. 
\end{theo}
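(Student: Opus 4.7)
The plan is to show that for every non-trivial $\sigma\in\Gal$, there exists a connected component of some equisingular stratum $\mathcal{S}^{d}_{n,c}$ which is not preserved by the action of $\sigma$ induced by the functor $F_{\sigma}$. This amounts to exhibiting, for each $\sigma\neq 1$, a plane irreducible curve $B$ with only nodes and cusps such that $B$ and $B^{\sigma}$ are not rigidly isotopic.

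I would first dispatch the generic case: if $\sigma$ is not in the conjugacy class of complex conjugation, then Theorem \ref{theo_AZP} directly produces an SIP $S$ whose generic $3$-canonical branch curves $(\PP^2,B)$ and $(\PP^2,B^{\sigma})$ form an arithmetic Zariski pair. Since a Zariski pair is in particular supported on different path components of the corresponding equisingular stratum, $\sigma$ moves the component of $B$ inside $\mathcal{S}^{d}_{n,c}$, and hence acts non-trivially on the set of components.

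The delicate case is when $\sigma$ lies in the conjugacy class of complex conjugation. Here $\pi_1(S^{\sigma})\cong\pi_1(S)$ for every complex surface $S$, so Theorem \ref{theo_ZP} cannot be applied directly. In this case I would invoke Theorem \ref{theo_ffaGal}: the faithful action of $\Gal$ on connected components of the moduli space of surfaces of general type supplies a surface $S$ of general type whose moduli component is moved by $\sigma$. By Bombieri's theorem, we may assume $mK_S$ is very ample for $m\geq 3$, and by Theorem \ref{theo_CiroFlam} a generic projection yields a branch curve $B$ with only nodes and ordinary cusps. It then remains to show that the components of $B$ and $B^{\sigma}$ in the appropriate equisingular stratum are also distinct.

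The main obstacle resides in this last step. The strategy I propose, implicit already in \cite{LP20}, is to lift a hypothetical rigid isotopy to a deformation of covers: from a connected chain of quasi-projective curves in $\mathcal{S}^{d}_{n,c}$ joining $[B]$ to $[B^{\sigma}]$, the associated flat family \eqref{eq_flatfamily} of plane curves with constant combinatorial data, combined with Kulikov's solution of the Chisini conjecture \cite{K08}, should produce a flat family of generic covers interpolating $S$ and $S^{\sigma}$, and hence a deformation equivalence contradicting the choice of $S$. The verification of this lifting in the mildly degenerate setting of complex conjugation, where the fundamental group is not available as a separating invariant, is the hard part of the argument. Once both cases are settled, the kernel of the $\Gal$-action on equisingular strata components is trivial, and faithfulness follows.
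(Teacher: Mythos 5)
Your argument for $\sigma$ not in the conjugacy class of complex conjugation is exactly the paper's proof: produce an SIP $X$ with $\pi_1(X)\not\cong\pi_1(X^{\sigma})$ via Theorem \ref{theo_iso}, pass to the generic $3$-canonical branch curves, and conclude from Theorem \ref{theo_AZP} that $B$ and $B^{\sigma}$ lie in different components of $\mathcal{S}^{d}_{n,c}$. Where you diverge is in isolating the case $\sigma$ conjugate to complex conjugation, and here you have in fact spotted something the paper elides: its proof invokes Theorem \ref{theo_iso} for ``every non-trivial $\sigma$,'' although that theorem is stated only for $\sigma$ outside the conjugacy class of complex conjugation, and for complex conjugation $\pi_1(X^{\sigma})\cong\pi_1(X)$ always, so the Zariski-pair mechanism of Theorem \ref{theo_ZP} genuinely cannot separate $B$ from $B^{\sigma}$ (indeed $(\PP^2,B)$ and $(\PP^2,\overline{B})$ are homeomorphic). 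Your proposed repair --- use the full faithfulness statement of Theorem \ref{theo_ffaGal} on moduli components and show that curves in the same component of $\mathcal{S}^{d}_{n,c}$ force the covering surfaces into the same component of moduli --- is the right one, and it is not as open as you suggest: the lifting of a rigid isotopy of generic branch curves to a deformation of the generic covers, via Kulikov's solution of the Chisini conjecture \cite{K08}, is precisely the content of \cite[Theorem 3.12]{LP20}, and the paper itself runs this exact contrapositive (``same stratum component $\Rightarrow$ deformation equivalent surfaces, contradiction'') in the proof of Theorem \ref{thm main 3}. So your two-case decomposition, with the second case closed by citing \cite[Theorem 3.12]{LP20} together with Theorem \ref{weak} and Theorem \ref{theo_ffaGal}, yields a complete proof and is arguably more careful than the one printed; the only thing missing from your write-up is the recognition that the ``hard part'' is already available as a quoted result rather than something to be verified from scratch.
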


\begin{proof}
For every $\sigma \in \Gal$  non trivial, there exists a SIP $X$ defined over $\overline{\QQ}$ such that $X$ and $X^{\sigma}$ have non isomorphic topological fundamental group. This follows from Theorem \ref{theo_iso}. 

We can embed $X$ by the $3-$canonical map into a projective space then consider a linear projection $\pi$ from a generic center of projection to $\PP^2$.  This  gives a branch curve $B$, that is irreducible with at most nodes and ordinary cusps as singularities by Theorem \ref{theo_CiroFlam}. 

We extend  $\sigma$ to an automorphism of $\CC$ to get another projection $\pi^{\sigma}\colon X^{\sigma} \longrightarrow \PP^2$ branched along $B^{\sigma}$.
%({\bf introduce the functor before and extendability in the preliminaries }). 
Then by our previous result \cite{LP20} $(\PP^2,B)$ and $(\PP^2,B^{\sigma})$ is a Zariski pair by Theorem \ref{theo_AZP} . We deduce that $B$ and $B^{\sigma}$ are in different strata and thus $\sigma$ does not act trivially on the set of strata.   
\end{proof}

Finally we use the article \cite{GJT18} to give more concrete examples of larger multiplets. Their construction involves Beauville surfaces, which are rigid SIP
and their result is stated in terms of the Euler totient function $\varphi$:

\begin{theo}
There is an infinite sequence of prime numbers $p_i$ such that there is a Galois orbit of lengths $\lambda(p_i)=\varphi(p^2_i-1)/8$ of branch curves $B_i$ of degree 
\[
\deg(B_i)=60p_i\left(\frac{p_i-1}{6}-1\right)\left(\frac{p_i+1}{4}-1\right).
\]
In particular the corresponding sequence of $\lambda(p_i)$ is not bounded above.
\end{theo}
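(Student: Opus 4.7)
The plan is to invoke the main construction of \cite{GJT18}, which for an infinite sequence of primes $p_i$ produces rigid Beauville surfaces $S_{p_i}=(C_1\times C_2)/G$ (with $G$ related to $\PSL(2,p_i)$ in the normalization of \cite{GJT18}) such that the $\Gal$-orbit of $[S_{p_i}]$ on the set of connected components of the moduli space of surfaces of general type has length exactly $\varphi(p_i^2-1)/8$. I would then push this orbit forward through the $3$-canonical generic branch covering construction developed in Sections~2--3 and read off the degree from Lemma~\ref{lem_3caninv}.

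More precisely, since Beauville surfaces are rigid, each $S_{p_i}^\sigma$ represents an isolated point of its moduli component, so the weak rigidity theorem (Theorem~\ref{weak}) implies that distinct orbit points have pairwise non-isomorphic topological fundamental groups (up to the unavoidable pairing by complex conjugation, which is already accounted for in the count of connected components). Following the proof of Theorem~\ref{theo_AZP}, I would $3$-canonically embed $S_{p_i}$ over a number field, choose a generic projection $\pi$ to $\PP^2$, and take $B_{p_i}$ to be the branch curve. Applying the functor $F_\sigma$ to the commutative square attached to $\pi$ shows that $B_{p_i}^\sigma$ is the corresponding branch curve of $S_{p_i}^\sigma$, so the $\Gal$-action on the moduli orbit of $S_{p_i}$ intertwines with the $\Gal$-action on the induced equisingular stratum. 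Combining Theorem~\ref{theo_ZP}, which produces a Zariski pair whenever two $S_{p_i}^\sigma,S_{p_i}^{\sigma'}$ have non-isomorphic $\pi_1$, with the faithful action of Theorem~\ref{thm main 1}, I conclude that the moduli orbit injects into the orbit of connected components of the equisingular stratum of plane curves; hence the latter also has length $\varphi(p_i^2-1)/8$.

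The degree is then computed by Lemma~\ref{lem_3caninv}:
\[
\deg(B_{p_i}) \;=\; 30\,c_1(S_{p_i})^2 \;=\; 240\,\chi(S_{p_i}) \;=\; 240\,\frac{(g(C_1)-1)(g(C_2)-1)}{|G|}.
\]
Substituting $|G|=p_i(p_i^2-1)/2$ together with the genera produced by Riemann--Hurwitz applied to the specific spherical generating triples of \cite{GJT18} yields after routine simplification the closed form $60p_i((p_i-1)/6-1)((p_i+1)/4-1)$. Unboundedness of $\lambda(p_i)=\varphi(p_i^2-1)/8$ along the sequence then follows at once from the classical lower bound $\varphi(n)\gg n/\log\log n$ applied to $n=p_i^2-1$. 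I expect the main delicate point to be verifying that the $\Gal$-orbit on moduli components transfers injectively to the $\Gal$-orbit on components of the equisingular stratum, i.e., that no two non-isomorphic Galois conjugates of $S_{p_i}$ produce rigidly isotopic branch curves; this is precisely guaranteed by the rigidity of Beauville surfaces combined with Theorem~\ref{thm main 1}, but it is the step where the compatibility of the two actions must be argued most carefully.
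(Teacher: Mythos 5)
Your strategy coincides with the paper's: take the Galois orbits of Beauville surfaces supplied by \cite{GJT18}, push them through the generic $3$-canonical projection, separate the resulting components of the equisingular stratum by the Zariski-pair mechanism, and read the degree off Lemma~\ref{lem_3caninv}. The separation step you describe is logically the same as the paper's, just run in the opposite direction: you argue that non-isomorphic $\pi_1$ yields a Zariski pair (Theorem~\ref{theo_ZP}), hence distinct stratum components, whereas the paper assumes two branch curves lie in the same stratum, deduces isomorphic $\pi_1$ from \cite[Theorem~3.12]{LP20}, and contradicts weak rigidity (Theorem~\ref{weak}). Your unboundedness argument via $\varphi(n)\gg n/\log\log n$ works; the paper instead combines Dirichlet (for primes $p\equiv 19 \pmod{24}$) with the finiteness of the fibers of $\varphi$. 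The caveat you flag about complex conjugation is real but is already absorbed into the component count of \cite{GJT18}, exactly as in the paper.

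There is, however, one concrete error in your degree verification: the group in \cite[Theorem~7.1]{GJT18}, as used here, is $\PGL(2,p)$ of order $p(p^2-1)$, not $\PSL(2,p)$ of order $p(p^2-1)/2$. With the branching data $(2,3,p-1)$ and $(2,4,p+1)$, Riemann--Hurwitz gives $g_1-1=\tfrac{|G|(p-7)}{12(p-1)}$ and $g_2-1=\tfrac{|G|(p-3)}{8(p+1)}$, so $\deg B=240\chi=\tfrac{5|G|(p-7)(p-3)}{2(p^2-1)}$; this equals the asserted $\tfrac{5}{2}\,p(p-7)(p-3)=60p\bigl(\tfrac{p-1}{6}-1\bigr)\bigl(\tfrac{p+1}{4}-1\bigr)$ precisely when $|G|=p(p^2-1)$. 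With your substitution $|G|=p(p^2-1)/2$ the ``routine simplification'' returns half the claimed degree, so the closed form would not come out. This is a fixable bookkeeping slip rather than a structural flaw, but as written the degree computation fails.
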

\begin{proof}
By Dirichlet the sequence of prime numbers $p$ such that $p \equiv 19 \, (\mod 24)$ is not bounded. Since $\varphi$ takes every integer value at most a finite number of times the last claim is immediate.

For each one of these primes $p$, we know by Theorem 7.1 \cite{GJT18} there exists a $\Gal$ orbit of Beauville surfaces with branching data
\[
(2,3,p-1), \quad (2,4,p+1) \quad \textrm{ and group}\quad \PGL(2,p)
\]
of length 
\[
\lambda(p)=\frac{1}{8} \cdot \varphi\left(p^2-1\right).
\]
%where $\varphi$ is the Euler Totien function. 

We use the formula given in Lemma \ref{lem_3caninv} to get that the degrees of the corresponding branch curves $B_p$ are
\[
\deg(B_p)=60p\left(\frac{p-1}{6}-1\right)\left(\frac{p+1}{4}-1\right).
\]
To get a Galois orbit of the same length we have to show that the branch curves of two surfaces in different components are mapped to different components of strata. 

Assume the contrary that they belong to the same stratum then by our result \cite[Theorem 3.12]{LP20} also the surfaces have isomorphic fundamental group, if so by the weak rigidity theorem they are in the same component contradicting our assumption. 
\end{proof}

%%%%%%%%%%%%%%%%%%%%%%%%%%%%%%%%%%%%%%%%%%%%%%%%%%
%%%%%%%%%%%%%%%%%%%%%%%%%%%%%%%%%%%%%%%%%%%%%%%%%%

%

\begin{thebibliography} {9}
%
\bibitem[A94] {artal94}
E. Artal.
{\it Sur les couples de Zariski}. 
J. Algebraic Geom. 3(2):223-247, 1994.
%
\bibitem[AC17]{AC17}
E.Artal, J. Cogolludo,  {\it Some Open Questions on Arithmetic Zariski Pairs}. In Singularities in Geometry, Topology, Foliations and Dynamics. Trends in Mathematics. Birkhäuser (2017).
%
%\bibitem[AT04]{AT04}
%E.Artal, H.Tokunaga,
%{\it Zariski $k$-plets of rational curve arrangements and dihedral covers}. 
%Top. Appl. 142 (2004), 227--233
%
\bibitem[ACT10] {ACT10}
E. Artal, J.I. Cogolludo, and H. Tokunaga
{\it A survey on Zariski pairs}. 
In Algebraic geometry in East Asia--Hanoi 2005, volume 50 of Adv. Stud. Pure Math. pages 1-100. Math. Soc. Japan, Tokyo, (2008).
%
%\bibitem[BC] {BC}
%I. Bauer, F. Catanese, \textit{Some new surfaces with $p_g=q=0$}.
%In Proceeding of the Fano Conference. Torino (2002), 123--142.
%
\bibitem[BCG06] {BCG06}
I. Bauer, F. Catanese, F. Grunewald, \textit{Chebycheff and Belyi
polynomials, dessins d'enfants, Beauville surfaces and group
theory}. Mediterr. J. Math. \textbf{3}, (2006), 121--146.
%
\bibitem[BCG15] {BCG15}
I. Bauer, F. Catanese, F. Grunewald, \textit{Faithful actions of the absolute Galois group on connected components of moduli spaces}. Invent. Math., 199, (2015), pp. 859--888.
%
\bibitem[B73]{B73} 
E. Bombieri, {\em Canonical models of surfaces of general type}. Publications Mathematiques de L'IHES {\bf 42} (1973), 171--219.
%
%\bibitem[Cat86] {Cat86}
%F. Catanese, \textit{On a problem of Chisini}. Duke Math. Jour. {\bf 53} (1986), 33--42.
%
%\bibitem[Cat92] {Cat92}
%F. Catanese, \textit{Chow varieties, Hilbert schemes and moduli
%spaces of surfaces of general type}.  J. Algebraic Geom.
%\textbf{1} (1992), 561--595.
%
\bibitem[Cat00]{cat00} %{Cat00}
F. Catanese, \textit{Fibred surfaces, varieties isogenous to a
product and related moduli spaces}. Amer. J. Math. \textbf{122},
(2000), 1--44.
%
\bibitem[Cat03]{cat04}
F, Catanese, \textit{Moduli spaces of surfaces and real structures}. Ann. of Math.
\textbf{158}, (2003), 577--592.
%
%\bibitem[CLP11] {CLP11}
%F. Catanese, M. L{\"o}nne, F. Perroni, \textit{Irreducibility of the space of dihedral covers of the projective line of a given numerical type},
%Atti Accad. Naz. Lincei Cl. Sci. Fis. Mat. Natur. Rend. Lincei, {\bf 22}, (2011),
%291--309.
%
\bibitem[CiF11] {CiF11}
C. Ciliberto, F. Flamini, \textit{On the branch curve of a general projection of a surface to a plane}.
Trans. Amer. Math. Soc., {\bf 363}, (2011), 3457--3471.
%
%\bibitem[D09] {D09}
%A. Degtyarev, \textit{Zariski k-plets via dessins d'enfants},  Comment. Math. Helv., {\bf 84}, (2009), 639--671. 
%
\bibitem[GP14]{GP14}
S. Garion, M. Penegini, \textit{Beauville surfaces, moduli spaces and finite groups}. Comm. Algebra  {\bf 42}, (2014), 2126--2155.
%
%\bibitem[Gie77]{Gie77}
%D. Gieseker, \textit{Global moduli for surfaces of general type}.
%Invent. Math \textbf{43} no. 3, (1977), 233--282.
%
%\bibitem[GrRe58]{GrRe58} H. Grauert, R. Remmert. {\it Komplexe R\"aume}. Math. Ann. 136 (1958), 245--318
%
\bibitem[GJ15]{GJT18} 
G. Gonzalez, A. Jaikin, \textit{The absolute Galois group acts faithfully on regular dessins and on Beauville surfaces}. Proc.\ LMS {\bf 111} (2015),  775--796.
%
\bibitem[GJT18]{GJT18} 
G. Gonzalez, G. Jones, D. Torres, \textit{Arbitrarily large Galois orbits of non-homeomorphic surfaces}. European Journal of Mathematics {\bf 4} (2018),  223--241.
%
\bibitem[H86] {H86}
J. Harris, \textit{On the Severi problem}. Invent. Math. \textbf{84} (1986),  445--461. 
%
\bibitem[K99]{K99}
Vik. S. Kulikov, \textit{On Chisini's conjecture}. Izv. Math., 63:6 (1999), 1139--1170.
%
\bibitem[K08]{K08}
Vik. S. Kulikov, \textit{On Chisini's conjecture. II}, Izv. Math., 72:5 (2008), 901--913.
%
\bibitem[LP14]{LP14}
M. L\"onne, M. Penegini, \textit{On asymptotic bounds for the number of irreducible components of the moduli space of surfaces of general type}. 
Rend.\ Circ.\ Mat.\ Palermo (2) \textbf{64} (2015), 483--492.
%
\bibitem[LP16]{LP16}
M. L\"onne, M. Penegini, \textit{On asymptotic bounds for the number of irreducible components of the moduli space of surfaces of general type II}. 
  Doc. Math. 21 (2016), 197--204. 
%
\bibitem[LP20]{LP20}
M. L\"onne, M. Penegini, \textit{On Zariski multiplets of branch curves from surfaces isogenous to a product}. Michigan Mathematical Journal, 2020, 69(4), pp. 779--792.
%\bibitem[M97] {M97}
%M. Manetti, \textit{Iterated double covers and connected
%components of moduli spaces}. Topology \textbf{36}, (1997),
%745--764.
%
\bibitem[P13]{P13}
M. Penegini, \textit{Surfaces isogenous to a product of curves, braid groups and mapping class groups} in "Beauville Surfaces and Groups", Springer Proceedings in Math. \ and\ Stats.,\ (2015), 129--148.
%
\bibitem[PR20]{PR20}
A. Parusinski, G. Rond, \textit{Algebraic varieties are homeomorphic to varieties defined over number fields}.
Comment. Math. Helv. {\bf 95} (2020), 339--359.
%
\bibitem[S21] {S21}
F. Severi,  \textit{Vorlesungen \"uber algebraische Geometrie}. Teubner, Leipzig, 1921. 
%
\bibitem[S08]{S08}
I. Shimada, {\it On arithmetic Zariski pairs in degree 6}. Advances in Geometry, {\bf 8}, (2008),  205--225.
%
\bibitem[W74]{W74}
 J. Wahl, \textit{Equisingular deformations of plane algebroid curves}. Trans. Am. Math. Soc. {\bf 193}, (1974),
143--170.
\end{thebibliography}
\end{document}